\newtheorem{lemma}{Lemma}
\newtheorem{theorem}{Theorem}
\newtheorem{claim}{Claim}
\newtheorem{corollary}{Corollary}
\newtheorem{conjecture}{Conjecture}
\newtheorem{obs}{Observation}
\def\claimb{\vcenter\bgroup\advance\hsize by -8em\noindent
\refstepcounter{claimb}\ignorespaces\it}
\def\endclaimb{\rm\egroup\leqno(\theclaim)\global\@ignoretrue}
\noindent \emph{Proof.} {}{#1}{}}{\hfill
\newcounter{rulecnt}
\newcommand{\rrule}[2]{\medskip\noindent{\bf \refstepcounter{rulecnt}\label{#1}Rule~\ref{#1}}  #2\ }%
\newcommand{\rxrule}[1]{\noindent{Rule~\ref{#1}}}%
\newcounter{confcnt}
\newcommand{\rconf}[2]{\noindent{\bf \refstepcounter{confcnt}\label{#1}($C_{\ref{#1}}$)}  #2\ }%
\newcommand{\rxconf}[1]{\noindent{\textup{(}\textbf{$C_{\ref{#1}}$}\textup{)}}}%
\newcommand{\ceil}[1]{\left\lceil #1 \right\rceil}
\def\S{\mathcal{S}}
\def\Pp{\mathcal{P}}
\def\Cc{\mathcal{C}}
\def\ch{\text{ch}}
\def\G{\mathcal{G}}
\begin{document}

\title{Partitioning edges of a planar graph into linear forests\\ and a matching}

\date{}

\author{Marthe Bonamy\thanks{LaBRI, University of Bordeaux, France} \and Jadwiga Czy\.zewska\thanks{University of Warsaw, Poland} \and \L ukasz Kowalik\footnotemark[2] \and Micha\l \ Pilipczuk\footnotemark[2]}

\maketitle

\begin{abstract}
We show that the edges of any planar graph of maximum degree at most $9$ can be partitioned into $4$ linear forests and a matching.
Combined with known results, this implies that the edges of any planar graph $G$ of odd maximum degree $\Delta\ge 9$ can be partitioned into $\tfrac{\Delta-1}{2}$ linear forests and one matching.
This strengthens well-known results stating that graphs in this class have chromatic index $\Delta$ [Vizing, 1965] and linear arboricity at most $\ceil{(\Delta+1)/2}$ [Wu, 1999].
\end{abstract}

\section{Introduction}

A {\em linear forest} is a
forest in which every connected component is a path. The linear arboricity
${\rm la}(G)$ of a graph $G$ is the minimum number of linear forests in $G$, whose union is the whole $G$.
The parameter was introduced by Harary~\cite{harary} in 1970 and determining its value for various graph classes is a vital area of research by today.

Note that since matchings are linear forests, linear arboricity lies between the chromatic index (partitioning into matchings) and the arboricity (partitioning into forests).
The connection with chromatic index can be deeper than it seems, namely it is conjectured that linear arboricity enjoys an analogue of Vizing's theorem.
This analogue is called Linear Arboricity Conjecture~\cite{AEH, alon_ijm} and states that for every graph $G$, we have $\ceil{\tfrac{\Delta}{2}}\le{\rm la}(G)\le\ceil{\tfrac{\Delta+1}{2}}$ (the lower bound being trivial).
The conjecture has been proved for  $\Delta\in\{3,4,5,6,8,10\}$~\cite{AEH,akiyama2,EP,Guldan}, for complete bipartite graphs~\cite{AEH}, for planar graphs~\cite{Wu,WW} and recently for 3-degenerate graphs~\cite{BasavarajuBFP20}.

In this paper we focus on planar graphs.
Cygan et al.~\cite{CyganHKLW12} proved the following theorem.

\begin{theorem}[\cite{CyganHKLW12}]\label{th:cygan}
	For any planar graph $G$ with maximum degree $\Delta \geq 9$, we have ${\rm la}(G)=\lceil\tfrac{\Delta}{2}\rceil$.
\end{theorem}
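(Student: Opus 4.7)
My approach would be via a minimum counterexample combined with the discharging technique, the classical toolkit for sharpening chromatic-type parameters on planar graphs. The lower bound ${\rm la}(G) \geq \lceil\Delta/2\rceil$ is immediate: a vertex of degree $\Delta$ can contribute at most two edges to each linear forest. For the upper bound, when $\Delta$ is odd one has $\lceil(\Delta+1)/2\rceil = \lceil\Delta/2\rceil$, so Wu's planar linear arboricity result already suffices. The real content thus lies in the even case $\Delta \geq 10$, where one must improve Wu's bound by one and show that ${\rm la}(G) \leq \Delta/2$.

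So fix an even $\Delta \geq 10$ and let $G$ be a planar graph with $\Delta(G) \leq \Delta$ minimising $|V(G)| + |E(G)|$ among putative counterexamples. The plan is to compile a catalogue of \emph{reducible configurations} — small subgraphs whose presence in $G$ would allow the removal of a few vertices or edges, the decomposition of the remainder by minimality into $\Delta/2$ linear forests, and the extension of that decomposition back over the deleted pieces. Typical candidates include vertices of degree $2$ or $3$ with specific neighbourhood conditions, short paths of degree-two vertices, and edges $uv$ with $d(u)+d(v)$ bounded. Each reducibility argument must arrange, possibly after local recolouring à la Vizing, that the re-inserted edges admit colours respecting (i) at most two edges per colour at each endpoint and (ii) no closure of a monochromatic cycle nor extension of a pre-existing monochromatic path through a vertex already of colour-degree two.

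Once the catalogue of reducible configurations is fixed, I would invoke discharging. Assign to each vertex $v$ initial charge $d(v) - 4$ and to each face $f$ initial charge $\ell(f) - 4$, where $\ell(f)$ is the face length; by Euler's formula the total charge is $-8$. Design redistribution rules that funnel charge from high-degree vertices and long faces towards low-degree vertices and triangles, with the aim that — absent every configuration on the reducible list — every vertex and every face ends up with non-negative charge. The resulting $0 \leq -8$ contradiction then closes the argument.

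The main obstacle will be the reducibility proofs themselves. Extending a partial linear-forest decomposition after reinserting an edge or vertex is genuinely subtler than extending an edge-colouring: beyond avoiding a colour class of degree $3$ at a vertex, one must prevent the creation of monochromatic cycles, which couples local colour choices to the global path structure of each forest. To make the discharging close, the reducible list must be rich enough that no planar graph of average degree below $6$ avoids all its members; each configuration added demands its own careful extension analysis, sometimes involving switching colours along alternating paths to free a colour at a boundary vertex. Balancing this tension — the combinatorial complexity of extending a partial decomposition against the structural constraints of planarity — is likely the reason the theorem requires the maximum degree to be at least $9$.
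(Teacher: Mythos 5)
Your reduction of the problem is sound: the lower bound is trivial, the odd-$\Delta$ case follows from Wu's planar bound ${\rm la}(G)\le\ceil{\tfrac{\Delta+1}{2}}$, and the substance is the even case $\Delta\ge 10$, to be attacked by a minimal counterexample, a list of reducible configurations, and discharging with charges $d(v)-4$ and $\ell(f)-4$ summing to $-8$. This is indeed the general method of the source you would need to reproduce (note that the present paper does not prove this statement at all --- it imports it from Cygan et al.~\cite{CyganHKLW12} --- so there is no in-paper proof to match against, only the cited one, whose strategy your outline mirrors).

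However, what you have written is a programme, not a proof, and the gap is precisely where all the difficulty lives. You never exhibit the unavoidable set of configurations, never prove reducibility of a single one of them, and never state the discharging rules or verify that every vertex and face ends with non-negative charge; you only say that ``typical candidates'' and suitably designed rules should exist. For this theorem those steps are not routine: extending a partial decomposition into $\Delta/2$ linear forests after reinserting an edge must simultaneously control colour-degrees and forbid monochromatic cycles, which (as you yourself note) couples local choices to global path structure, and the known arguments handle this with lengthy case analyses over carefully chosen configurations. Without naming the configurations, giving their extension arguments, and closing the charge count, the central claim --- that a minimal counterexample with even $\Delta\ge 10$ cannot exist --- remains unproven; as it stands the proposal establishes only the easy lower bound and the odd case, and defers the entire even case to work not carried out.
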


They have posed the following conjecture.

\begin{conjecture}[Planar Linear Arboricity Conjecture~\cite{CyganHKLW12}]
	\label{plac}
	For any planar graph $G$ of maximum degree $\Delta\ge 5$, we have ${\rm la}(G)=\ceil{\tfrac{\Delta}{2}}$.
\end{conjecture}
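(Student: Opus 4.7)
The Planar Linear Arboricity Conjecture is already settled in a substantial range of $\Delta$, so before launching a new argument I would first reduce to the remaining open cases. The trivial lower bound $\lceil \Delta/2 \rceil$ coincides with the general Linear Arboricity Conjecture upper bound $\lceil (\Delta+1)/2\rceil$ exactly when $\Delta$ is odd; hence Wu's theorem, which establishes the LAC in the planar case, immediately gives the conjecture for every odd $\Delta\ge 5$, while Theorem~\ref{th:cygan} covers $\Delta \ge 9$ regardless of parity. The open portion of the conjecture therefore reduces to planar graphs with $\Delta\in\{6,8\}$, where the task is to save exactly one color compared with Wu's bound.

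For both remaining cases I would run the classical discharging method on a minimum counterexample $G$, assumed connected and such that every proper subgraph admits a decomposition into $\lceil \Delta/2\rceil$ linear forests. The first step is to isolate a list of \emph{reducible configurations}: local substructures $H$ whose presence in $G$ would contradict minimality because, given $H$, one can delete or contract it, apply the inductive hypothesis to the smaller graph, and extend the resulting decomposition across $E(H)$. The second step is a global argument via Euler's formula: assign initial charge $d(v)-4$ to each vertex $v$ and $\deg(f)-4$ to each face $f$, giving total charge $-8$; then design discharging rules sending charge from large faces and high-degree vertices toward triangles and low-degree vertices, and verify that, in the absence of every reducible configuration from the list, every element ends with nonnegative charge, a contradiction.

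The hard step, by a wide margin, is reducibility. Unlike in proper edge coloring, the color classes of a linear forest decomposition are globally constrained (no monochromatic cycle, maximum monochromatic degree two), so Kempe-type swaps are delicate: a swap that locally frees a color can introduce a monochromatic cycle far away. I would try to adapt the path-augmentation techniques of the present paper, enumerating the few possible color patterns with which the decomposition of $G-E(H)$ can meet the boundary of $H$ and, for each pattern, either extending directly or unblocking an extension by a carefully chosen sequence of swaps along monochromatic paths. The fact that we are saving precisely one color compared with the planar LAC bound is what causes configurations reducible for Wu's theorem to stop being reducible here, so one should expect both a richer list of reducible configurations and reducibility proofs that exploit planarity, and in particular faces of small length, much more heavily than in~\cite{Wu}. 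This tight color budget, together with the tension between local extendability and global acyclicity, is the reason the conjecture is still open in the cases $\Delta\in\{6,8\}$.
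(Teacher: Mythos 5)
The statement you were asked to prove is Conjecture~\ref{plac}, and the paper does not prove it: it is stated as an open conjecture of Cygan et al., and the concluding section of the paper explicitly reiterates that it remains open. Your proposal, read as a proof, therefore has a gap that is the entire mathematical core. The first paragraph is correct and matches the paper's own bookkeeping: for odd $\Delta\ge 5$ the bound ${\rm la}(G)\le\ceil{\tfrac{\Delta+1}{2}}$ of Wu already equals $\ceil{\tfrac{\Delta}{2}}$, and Theorem~\ref{th:cygan} settles all $\Delta\ge 9$, so the conjecture reduces to planar graphs with $\Delta\in\{6,8\}$. But from that point on you only describe the generic shape of a discharging proof (minimum counterexample, charges $d(v)-4$ and $\ell(f)-4$ summing to $-8$, a list of reducible configurations, rules making everything nonnegative) without producing any of its content: no concrete configurations, no argument that they are reducible under the linear-forest constraints, and no discharging rules verified against them. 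Saying that one ``would expect a richer list of reducible configurations'' is a research plan, not a step of a proof, and it is precisely at this step that all known attempts stop --- which is why the cases $\Delta\in\{6,8\}$ are open.

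To be clear about what would be needed: the reducibility obstruction you correctly identify (extending a partial decomposition across a configuration while avoiding monochromatic cycles created far away) is exactly the difficulty the present paper handles, for a different and easier statement ($\Delta\le 9$, four linear forests plus a matching), only by a combination of hand proofs and an exhaustive computer search over colour classes $(\Cc,\Pp)$ of boundary colourings. Nothing in your text supplies an analogue of that analysis for $\Delta=6$ or $\Delta=8$, so no conclusion about Conjecture~\ref{plac} follows. The honest summary is: your reduction to $\Delta\in\{6,8\}$ is right, your strategic outline is reasonable and consistent with how such results are proved, but the statement is not proved by it --- nor by the paper, which only conjectures it.
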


Note that Conjecture~\ref{plac} implies the Vizing Planar Graph Conjecture~\cite{Vizing65}, stating that any planar graph $G$ of maximum degree $\Delta\ge 6$ has chromatic index $\Delta$ (currently open only for $\Delta=6$).

The equality ${\rm la}(G)=\ceil{\tfrac{\Delta}{2}}$ holds for all odd values of $\Delta$, which follows from the weaker upper bound ${\rm la}(G)\le\ceil{\tfrac{\Delta+1}{2}}$~\cite{Wu,WW}.
Clearly, the odd case gives much more freedom, since then for each vertex $v$ there is at least one linear forest which has at most one edge incident with $v$.
The motivating question of this work is {\em Can we use this freedom to get an even tighter result?}
More specifically, we conjecture the following.

\begin{conjecture}
	\label{con:new}
	For every planar graph $G$ of odd maximum degree $\Delta\ge 7$ the edges of $G$ can be partitioned into $\tfrac{\Delta-1}{2}$ linear forests and one matching.
\end{conjecture}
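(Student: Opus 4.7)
My plan is to establish the conjecture by strong induction on an odd parameter $d \geq 7$, in fact proving the stronger statement that every planar graph of maximum degree at most $d$ admits a partition of its edges into $(d-1)/2$ linear forests and one matching. The induction uses two base cases, $d = 7$ and $d = 9$, together with a reduction from $d + 2$ to $d$ for $d \geq 7$.

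For $d = 9$, the base case is the main lemma promised in the abstract, to be proved by discharging over a minimum counterexample: identify a list of structural configurations whose presence permits extending a partition of a smaller planar graph, and design charge-redistribution rules forcing at least one such configuration to appear in every planar graph of maximum degree at most $9$. For $d = 7$, the analogous statement is genuinely new and is the main obstacle in the plan. The underlying difficulty is that a budget of three linear forests and one matching leaves almost no slack: a vertex of degree $7$ must receive exactly two edges in each of the three linear forests and exactly one in the matching, so local extensions from reducible configurations become significantly more constrained, and correspondingly many more configurations must be shown unavoidable. I would follow the same discharging framework as for $d = 9$ but with a substantially enriched list of reducible configurations and finer charge-transfer rules, with most of the technical work concentrated on configurations involving degree-$6$ and degree-$7$ vertices incident to short faces. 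Importantly, for a planar graph of maximum degree at most $6$, the statement already follows from the known linear-arboricity bound of $3$, so the discharging need only rule out a minimum counterexample containing at least one degree-$7$ vertex.

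For the inductive step, assume the statement for odd $d \geq 7$ and prove it for $d + 2$. Given a planar graph $G$ with maximum degree at most $d + 2$, I would construct a linear forest $F \subseteq E(G)$ such that $G - F$ has maximum degree at most $d$; applying the inductive hypothesis to $G - F$ and appending $F$ as one of the $(d+1)/2$ linear forests finishes the proof. To build $F$, I would invoke the Vizing Planar Graph Conjecture (known for $\Delta \geq 7$) to fix a proper $(d+2)$-edge-colouring of $G$, so that the union $C_a \cup C_b$ of any two colour classes has maximum degree $2$ and is $2$-regular at every vertex of degree $d + 2$, forming a disjoint union of paths and even cycles. Removing from each cycle a single edge whose endpoints both have degree less than $d + 2$ turns this union into the sought linear forest $F$. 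The delicate point is handling cycles consisting entirely of degree-$(d+2)$ vertices, which I would address by Kempe-chain recolouring to locally perturb the pair $\{a, b\}$ and open such cycles, relying on planarity constraints to ensure that a suitable swap exists. Overall, the $d = 7$ base case is by far the dominant technical hurdle; the inductive step is a comparatively routine reduction once the Kempe-chain bookkeeping is settled.
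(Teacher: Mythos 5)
You are attempting to prove Conjecture~\ref{con:new}, which the paper itself does \emph{not} prove in full: it establishes the case $\Delta=9$ (Theorem~\ref{th:main}, the main result), derives $\Delta\ge 11$ from known results, and explicitly leaves $\Delta=7$ open. Your ``base case $d=7$'' is therefore the crux of your plan, and your treatment of it is a declaration of intent (``the same discharging framework with a substantially enriched list of reducible configurations''), not an argument. The paper reports in its conclusion that precisely this attempt fails: with only three linear forests and a matching there is too little charge, the authors ``quickly arrive at configurations with elements of negative charge that cannot be reduced'' by their standard reducibility proofs, and they suspect an argument of a more global nature is needed. Your fallback claim that the case $\Delta\le 6$ ``already follows from the known linear-arboricity bound of $3$'' is also false: for planar graphs with $\Delta=6$ the known bound is ${\rm la}(G)\le\ceil{\tfrac{\Delta+1}{2}}=4$, and ${\rm la}(G)=3$ for $\Delta=6$ is exactly an open case of the Planar Linear Arboricity Conjecture (Conjecture~\ref{plac}); even the weaker target of three linear forests plus a matching for $\Delta=6$ does not follow from anything you cite. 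So the dominant technical hurdle you identify is not merely hard --- it is an open problem, and nothing in your proposal closes it.

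Your inductive step has an unproven core as well, and is in any case not how the paper proceeds. To reduce from maximum degree $d+2$ to $d$ you need a linear forest $F$ meeting every degree-$(d+2)$ vertex in exactly two edges (and every degree-$(d+1)$ vertex in at least one); the union $C_a\cup C_b$ of two colour classes gives paths and even cycles, and a cycle all of whose vertices have degree $d+2$ cannot be opened by deleting an edge without leaving a vertex of degree $d+1$ in $G-F$. Your fix --- Kempe-chain swaps ``relying on planarity constraints to ensure that a suitable swap exists'' --- is exactly the hard part and is asserted without proof; if such cycle surgery were routine, extracting degree-reducing linear forests from proper edge-colourings would make the even cases of the Planar LAC nearly immediate from class-I-ness, which they are not (this kind of cycle-breaking is the substance of Theorem~\ref{th:cygan}). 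The paper sidesteps all of this for $\Delta\ge 11$: remove a \emph{single} colour class $M$ of a Vizing $\Delta$-edge-colouring, so that $G-M$ has \emph{even} maximum degree $\Delta-1\ge 10$, and invoke Theorem~\ref{th:cygan} to split $G-M$ into $\tfrac{\Delta-1}{2}$ linear forests --- no induction on $\Delta$ and no recolouring needed. With that replacement your reduction step becomes correct, but the conjecture still stands or falls with $\Delta=7$, which your proposal does not resolve.
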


Note that Conjecture~\ref{con:new} cannot be extended to $\Delta\in\{3,5\}$.
This is because Conjecture~\ref{con:new} implies that the graph under consideration has chromatic index $\Delta$ (color each linear forests with two colours and the matching forms the last colour).
However, the 4-clique and the icosahedron, each with one edge subdivided, are well-known to have chromatic index $\Delta+1$ ~\cite{Vizing65}.

On the other hand, known results imply Conjecture~\ref{con:new} for $\Delta\ge 11$.
Indeed, pick a $\Delta$-edge colouring of $G$ which exists by a theorem of Vizing~\cite{Vizing65}.
Let $M$ be the matching formed by an arbitrary colour in the colouring.
Then $G-M$ has maximum degree $\Delta-1$, which is even.
The claim follows by combining the matching $M$ with the partition of $G-M$ into $(\Delta(G)-1)/2$ linear forests, obtained using Theorem~\ref{th:cygan}.

Thus, the only two missing cases of Conjecture~\ref{con:new} are $\Delta=7$ and $\Delta=9$. In this work, we resolve the latter, as follows.

\begin{theorem}\label{th:main}
For any planar graph $G$ with maximum degree $\Delta(G) \leq 9$, we can partition the edges of $G$ into four linear forests and a matching.
\end{theorem}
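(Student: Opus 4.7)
The plan is to argue by contradiction via the discharging method, the standard tool for structural planar colouring theorems and the one used to prove Theorem~\ref{th:cygan}. Let $G$ be a counterexample minimising first $|V(G)|$ and then $|E(G)|$, so $G$ is planar with maximum degree at most $9$ and admits no partition into four linear forests $L_1,\dots,L_4$ and a matching $M$. By minimality $G$ is connected and has minimum degree at least $2$: a pendant edge $uv$ can be deleted, the partition extended inductively to $G-uv$, and $uv$ re-inserted into any of the five classes not yet full at $v$, which exists because the five classes have total capacity $2+2+2+2+1=9$.

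The first and harder half of the argument consists of proving that $G$ avoids a list of reducible configurations. A reducible configuration is a small subgraph $H$, typically built around a vertex of degree $2$, $3$, or $4$ together with some constraints on the degrees of its neighbours or on the face structure around it, such that any appearance of $H$ in $G$ allows one to delete a carefully chosen edge set $F\subseteq E(H)$, invoke minimality on $G-F$, and re-insert the edges of $F$ after local recolouring. Each re-inserted edge must respect, at every endpoint, at most two incidences per linear forest, no linear forest closed into a cycle, and at most one incidence in $M$. When a direct assignment fails, we rely on Kempe-style swaps between two of the five classes, either pairing two linear forests or exchanging an edge between a linear forest and the matching; several such swaps may be chained to repair a single configuration. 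The second half of the argument is a discharging computation with initial charge $d(v)-4$ on each vertex $v$ and $d(f)-4$ on each face $f$, summing to $-8$ by Euler's formula; rules move charge from vertices of degree at least $5$ and faces of length at least $5$ towards vertices of degree $2,3,4$ and short faces, with coefficients calibrated so that avoiding the reducible list forces every element to end with non-negative charge, contradicting the negative total.

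The main obstacle will be the extension step for the reducible configurations, because the matching $M$ is fundamentally non-local: inserting an edge into $M$ may be blocked by a matched edge already incident to the other endpoint, and Kempe chains mixing a linear forest with a matching need not alternate cleanly. Each reducible configuration therefore splits into many subcases, indexed by which class is deficient at each participating vertex and by the colours of the surrounding edges, and the analysis will repeatedly combine forest--forest swaps with forest--matching swaps. A secondary but intertwined difficulty is that the slack in the capacities is minimal---a degree-$9$ vertex already saturates all five classes, so the discharging has essentially no room to spare---which forces the reducibility list to be fine-grained and tightly matched to the discharging rules; crafting this pairing is likely to be the bulk of the technical effort.
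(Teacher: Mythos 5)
Your overall strategy is the same as the paper's: take a counterexample minimising the number of edges, exclude a list of reducible configurations, and run a discharging argument with initial charges $d(v)-4$ and $\ell(f)-4$ against Euler's formula. But as written your text is a plan, not a proof, and the gap is precisely where you acknowledge "the bulk of the technical effort" lies: you never exhibit the set of reducible configurations, never prove any of them reducible, and never state or verify discharging rules. In this problem those steps are not routine. The paper needs ten specific configurations (an edge $uv$ with $d(u)+d(v)\le 10$, a vertex with two $2$-neighbours, several delicate configurations involving weak and semi-weak $3$-neighbours, triangles with prescribed degree triples, etc.), and the reducibility arguments are so long that only two are done by hand; the remaining eight are verified by a computer program that enumerates equivalence classes of colourings of the exterior of the configuration, a class being recorded by the multiset of colours on the outside edges at each configuration vertex together with the pattern of monochromatic outside paths joining configuration vertices. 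Your proposed repair mechanism, chains of Kempe-style swaps between two classes, is exactly what fails to behave here: a class is either a linear forest (capacity two per vertex, no cycles) or a matching (capacity one), so the components of the union of two classes are not paths or even cycles in general, and swapping along them neither preserves validity nor terminates in any controlled way. The paper's reductions instead delete an edge (or, for the two-$2$-neighbours configuration, pass to a modified graph obtained by adding edges such as $av$, $vb$, which is needed because that configuration provably cannot be reduced by the delete-one-edge scheme) and then extend the colouring by a finite case analysis over the exterior classes.

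There is also a quantitative gap on the discharging side: asserting that "coefficients calibrated so that avoiding the reducible list forces every element to end with non-negative charge" exists is the whole content of the second half of the argument. In the paper this requires rules sending charge from $5^+$-vertices into triangular faces with amounts depending on the full degree triple (values $\tfrac15,\tfrac13,\tfrac25,\tfrac{7}{15},\tfrac12$), from $5^+$-faces through segments to $2$- and $3$-vertices, and a fairly intricate case analysis for vertices of degree $8$ and $9$ (including counting non-triangular segments against weak/semi-weak $3$-neighbours). None of this is derivable from the outline alone, so the proposal cannot be judged correct; it identifies the right method but leaves the theorem unproved.
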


Note that Theorem~\ref{th:main} strengthens well-known results stating that graphs in this class have chromatic index $\Delta$ [Vizing, 1965] and linear arboricity at most $\ceil{(\Delta+1)/2}$ [Wu 1999]. By combining Theorem~\ref{th:main} with Theorem~\ref{th:cygan} and the argument for the case $\Delta\ge 11$ of Conjecture~\ref{con:new} described above, we get the following corollary.

\begin{corollary}
Let $G$ be a planar graph with maximum degree $\Delta(G) \geq 9$.
If $\Delta(G)$ is even, the edges of $G$ can be partitioned into $\Delta(G)/2$ linear forests.
If $\Delta(G)$ is odd, the edges of $G$ can be partitioned into $(\Delta(G)-1)/2$ linear forests and a matching.
\end{corollary}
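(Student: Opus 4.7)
My plan is a straightforward case analysis on the parity and size of $\Delta := \Delta(G)$, assembling the three ingredients already laid out in the introduction: Theorem~\ref{th:cygan}, Theorem~\ref{th:main}, and Vizing's planar graph edge-colouring theorem (known for $\Delta \geq 7$). There are exactly three cases to handle: $\Delta$ even, $\Delta = 9$, and $\Delta$ odd with $\Delta \geq 11$.

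\smallskip

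\textbf{Even $\Delta$.} Since $\Delta \geq 9$ and $\Delta$ is even, we actually have $\Delta \geq 10$. Theorem~\ref{th:cygan} then gives ${\rm la}(G) = \lceil \Delta/2 \rceil = \Delta/2$, which is literally the statement that $E(G)$ decomposes into $\Delta/2$ linear forests, as required.

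\smallskip

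\textbf{Odd $\Delta$, $\Delta = 9$.} Here I invoke Theorem~\ref{th:main} directly: $G$ has maximum degree $\leq 9$, so its edges partition into four linear forests and a matching. Since $(\Delta-1)/2 = 4$, this is exactly the decomposition claimed.

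\smallskip

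\textbf{Odd $\Delta$, $\Delta \geq 11$.} I would mimic the reduction already sketched in the paragraph just before Theorem~\ref{th:main}. By Vizing's planar graph theorem (known for all $\Delta \geq 7$; cf.~\cite{Vizing65}), $G$ admits a proper edge-colouring with $\Delta$ colours. Let $M$ be an arbitrary colour class; $M$ is a matching, and $G - M$ is a planar graph with $\Delta(G - M) \leq \Delta - 1$, which is even and at least $10$. Applying Theorem~\ref{th:cygan} to $G - M$ yields a partition of its edges into $(\Delta-1)/2$ linear forests, and appending $M$ produces the desired decomposition of $G$ into $(\Delta-1)/2$ linear forests plus a matching.

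\smallskip

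There is essentially no obstacle in the corollary itself: the only piece of genuinely new content relative to earlier work is the boundary case $\Delta = 9$, which is precisely Theorem~\ref{th:main}. Once that is in hand, the even case reduces immediately to Theorem~\ref{th:cygan}, and the odd case with $\Delta \geq 11$ follows by peeling off one colour class of a Vizing edge-colouring to drop to an even maximum degree and then again invoking Theorem~\ref{th:cygan}. The real work of the paper therefore lies in Theorem~\ref{th:main}, and the corollary is a bookkeeping consequence.
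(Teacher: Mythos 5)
Your proposal is correct and matches the paper's own argument exactly: the paper also obtains the corollary by combining Theorem~\ref{th:main} for $\Delta=9$, Theorem~\ref{th:cygan} for even $\Delta$, and the Vizing-colouring reduction (peel off one colour class $M$ so that $\Delta(G-M)=\Delta-1$ is even, then apply Theorem~\ref{th:cygan}) for odd $\Delta\ge 11$. No gaps; this is precisely the bookkeeping the authors intend.
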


Theorem~\ref{th:main} is proved with the discharging method, by now a standard tool in colouring of planar graphs. It can be sketched as follows. We find a set of 10 configurations which cannot appear in a minimal counterexample, i.e., are {\em reducible}. Then, we show that Euler's formula implies that a minimal counterexample has to contain one of these configurations.
Unfortunately, the nature of our kind of colouring reveals much less symmetry than, for example, edge colouring or `pure' linear arboricity. This is because in our case a colour forms either a linear forest or a matching. As a result, the reducibility proofs for configurations become even more lengthy and tedious than usual. This is why here we include traditional hand-made proofs for only two configurations, while the reducibility of all the remaining ones is verified by a computer program.

The program has been extensively tested for a number of configurations. We compared results of two different implementations, one in C++, and another in Python.
The Python implementation (150 lines of code, excluding comments) was optimized for readability and is publicly available in a Github repository \url{https://github.com/lkowalik/linear-arboricity}. We give a detailed description of the program in Section~\ref{sec:computer} (and in the comments included in the code).

\section{Preliminaries}
All graphs in this paper are simple, i.e., do not contain multiple edges or loops. By a triangle we mean a cycle of length $3$.

For a graph $G$ and a vertex $v$ of $G$, by $d_G(v)$ we denote the degree of $v$ in $G$, and we omit the subscript when it is clear from the context.
A vertex of degree $d$ is called a {\em $d$-vertex}. Notation {\em $d^+$-vertex} (resp. {\em $d^-$-vertex}) means that this vertex is of degree at least (resp. at most) $d$.

A neighbour of degree $d$ adjacent to a vertex $v$ is called a {\em $d$-neighbour} of $v$. A neighbour of degree at least (resp. at most) $d$ to a vertex $v$ is called a {\em $d^+$-neighbour} (resp. {\em $d^-$-neighbour}) of $v$. Given two vertices $u$ and $v$, the vertex $u$ is a \emph{weak neighbour} (resp. \emph{semi-weak}) of $v$ if the edge $(u,v)$ belongs to exactly $2$ (resp. exactly $1$) triangles.

As $\ell(f)$ we denote the number of edges incident to a face $f$. A face is $big$ if $\ell(f)\geq 4$.

%Let $u,\ v$ be two vertices belonging to the same face $f$. The {\it face-distance} or $f$-distance is defined as the minimal length of $u - v$ path formed only by edges incident to $f$.

 A {\em facial walk} $w$ corresponding
to a face $f$ is the shortest closed walk induced by all edges incident with $f$.

Let $f$ be a face and let $v_0, v_1, \ldots, v_{\ell}$ be the facial walk of $f$, $v_{\ell}=v_0$. Then, for every $i=0,\ldots,\ell-1$, the triple $s=(v_{i-1}, v_i, v_{i+1})$ will be called a {\em $v_i$-segment} or just {\em segment} (indices considered modulo $\ell$).
The {\em length} of $s$ is defined as the length of $f$.
We say that $v_i$ is {\em incident} to $s$. We stress that $v_{i-1}$, $v_{i+1}$ are not incident to $s$.
A \emph{triangular segment} is a segment $s = (x, y, z)$ where $x$ and $z$ are connected with an edge.

Note that if $v$ is not a cutvertex, there is a one-to-one correspondence between $v$-segments and faces incident to $v$. When $v$ is a cutvertex, a $v$-segment $(x,v,y)$ can be thought of as a region of the face incident with edges $vx$ and $vy$.

\section{Proof of Theorem~\ref{th:main}}\label{sect:thm}
%załóżmy przeciwnie?
Let $\G$ be a family of minimal counterexamples, i.e., $G \in \G$ if $G$ is a simple planar graph with $\Delta(G) \leq 9$ whose edges cannot be partitioned into four linear forests and a matching and, among such graphs, $G$ has the minimum possible number of edges.

\subsection{Structure of a minimal counterexample}. We define configurations \rxconf{c:edge} to \rxconf{c:2neighbour33bigface} (see Figure~\ref{fig:config}).
\begin{itemize}
\item \rconf{c:edge}{is an edge $uv$ with $d(u)+d(v) \leq 10$.}
\item \rconf{c:kite}{is a triangular segment $(u,v,w)$ where $u$ has another neighbour $x$ such that $d(v)=d(x)=11-d(u)$.}
\item \rconf{c:2degree2neighbours}{is a vertex with two $2$-neighbours.}
\item \rconf{c:2consweak3neighbours}{is a vertex $u$ with neighbors $x$, $y$, $w$, $s$, $t$, where $d(y)=d(s)=3$, and $xywst$ is a path.}
\item \rconf{c:2weak3neighbours3neighbour}{is a vertex $u$ with three $3$-neighbours, from which at least two are weak and $u$ is the only common neighbour of any pair of weak $3$-neighbours of $u$.}
\item \rconf{c:weak3degree2}{is a vertex $u$ with both a weak $3$-neighbour and a $2$-neighbour.}
\item \rconf{c:8weak3degree4}{is a vertex $u$ of degree $8$ with both a weak $3$-neighbour and a $4$-neighbour.}
\item \rconf{c:smalltriangles}{is a triangle $(u, v, w)$ where $d(u)\le 5$, $d(v)\le 6$, $d(w)\leq 8$.}
\item \rconf{c:23triangles-3}{is a vertex $u$ with a $3$-neighbours $w$ and $z$, a $2$-neighbour $x$, and a neighbor $y$ which is adjacent to both $z$ and $x$.}
%\item \rconf{c:33triangles-2triangle}{is presented on the graphics.}
\item \rconf{c:2neighbour33bigface}{is a vertex $u$ with neighbors $v$, $x$, $y$, $z$, $s$, $t$, where $d(v)=2$, $d(y)=d(s)=3$, $y$ and $s$ share a common neigbour $z$ other than $u$, $x$ is adjacent to $y$ and $t$ is adjacent to $s$.}
\end{itemize}

\begin{figure}[!ht]
\centering
\subfloat[][\textbf{$\rxconf{c:edge}$}]{
\centering
\begin{tikzpicture}[scale=0.95]
\tikzstyle{whitenode}=[draw,circle,fill=white,minimum size=8pt,inner sep=0pt]
\tikzstyle{blacknode}=[draw,circle,fill=black,minimum size=6pt,inner sep=0pt]
\tikzstyle{tnode}=[draw,ellipse,fill=white,minimum size=8pt,inner sep=0pt]
     \draw (2,-3) node[whitenode] (u) [label=left:$u$] {}
        -- ++(360:1cm) node[whitenode] (v) [label=right:$v$] {};
    \node[right=0pt] at (1.4,-3.5) {\tiny{$d(u)+d(v)\leq 10$}};
\end{tikzpicture}
%\caption{\textbf{($C_2$)}}
\label{fig:cc1}
}
\qquad
\subfloat[][\textbf{$\rxconf{c:kite}$}]{
\centering
\begin{tikzpicture}[scale=0.95]
\tikzstyle{whitenode}=[draw,circle,fill=white,minimum size=8pt,inner sep=0pt]
\tikzstyle{blacknode}=[draw,circle,fill=black,minimum size=6pt,inner sep=0pt]
\tikzstyle{tnode}=[draw,ellipse,fill=white,minimum size=8pt,inner sep=0pt]
\tikzstyle{texte} =[fill=white, text=black]
     \draw (2,-3) node[whitenode] (u) [label=right:$u$] {}
        -- ++(60:1cm) node[whitenode] (v) [label=right:$v$] {}
        -- ++(180:1cm) node[whitenode] (w) [label=left:$w$] {};
    \draw (1,-3) node[whitenode] (x)[label=left:$x$] {};
    \draw (x) edge []  node [label=left:] {} (u);
    \draw (w) edge []  node [label=left:] {} (u);
    \node[right=0pt] at (0,-3.5) {\tiny{$d(v)=d(x)=11-d(u)$}};
\end{tikzpicture}
%\caption{\textbf{($C_2$)}}
\label{fig:cc2}
}
\qquad
\subfloat[][\textbf{$\rxconf{c:2degree2neighbours}$}]{
\centering
\begin{tikzpicture}[scale=0.95]
\tikzstyle{whitenode}=[draw,circle,fill=white,minimum size=8pt,inner sep=0pt]
\tikzstyle{blacknode}=[draw,circle,fill=black,minimum size=6pt,inner sep=0pt]
\tikzstyle{tnode}=[draw,ellipse,fill=white,minimum size=8pt,inner sep=0pt]
\tikzstyle{texte} =[fill=white, text=black]
     \draw (2,-3) node[whitenode] (u1) [label=above:$u$] {\tiny{2}}
        -- ++(180:1cm) node[whitenode] (v) [label=above:$v$] {}
        -- ++(180:1cm) node[whitenode] (u2) [label=above:$w$] {\tiny{2}};
\end{tikzpicture}
%\caption{\textbf{($C_3$)}}
\label{fig:cc3}
}
\qquad
\subfloat[][\textbf{$\rxconf{c:2consweak3neighbours}$}]{
\centering
\begin{tikzpicture}[scale=0.95]
\tikzstyle{whitenode}=[draw,circle,fill=white,minimum size=8pt,inner sep=0pt]
\tikzstyle{blacknode}=[draw,circle,fill=black,minimum size=6pt,inner sep=0pt]
\tikzstyle{tnode}=[draw,ellipse,fill=white,minimum size=8pt,inner sep=0pt]
\tikzstyle{texte} =[fill=white, text=black]
     \draw (2,-3) node[whitenode] (u) [label=above:$u$] {}
        -- ++(60:1cm) node[whitenode] (x1) [label=above:$x$] {}
        -- ++(180:1cm) node[whitenode] (v1) [label=above:$y$] {\tiny{$3$}}
        -- ++(240:1cm) node[whitenode] (x2) [label=left:$w$] {}
        -- ++(300:1cm) node[whitenode] (v2) [label=left:$s$] {\tiny{$3$}}
        -- ++(0:1cm) node[whitenode] (x3) [label=right:$t$]{};
    \draw (v1) edge []  node [label=left:] {} (u);
    \draw (v2) edge []  node [label=left:] {} (u);
    \draw (x2) edge []  node [label=left:] {} (u);
    \draw (x3) edge []  node [label=left:] {} (u);
\end{tikzpicture}
%\caption{\textbf{($C_4$)}}
\label{fig:cc4}
}

\qquad
\subfloat[][\textbf{$\rxconf{c:2weak3neighbours3neighbour}$}]{
\centering
\begin{tikzpicture}[scale=0.95]
\tikzstyle{whitenode}=[draw,circle,fill=white,minimum size=8pt,inner sep=0pt]
\tikzstyle{blacknode}=[draw,circle,fill=black,minimum size=6pt,inner sep=0pt]
\tikzstyle{tnode}=[draw,ellipse,fill=white,minimum size=8pt,inner sep=0pt]
\tikzstyle{texte} =[fill=white, text=black]
     \draw (2,-3) node[whitenode] (u) [label=left:$u$] {}
        -- ++(30:1cm) node[whitenode] (x1) [label=right:$x$] {}
        -- ++(150:1cm) node[whitenode] (v1) [label=above:$y$] {\tiny{$3$}}
        -- ++(210:1cm) node[whitenode] (x2) [label=left:$w$] {}
        -- ++(330:1cm) node[whitenode] (u2) [] {}
        -- ++(210:1cm) node[whitenode] (x3) [label=left:$z$] {}
        -- ++(330:1cm) node[whitenode] (v2) [label=right:$s$] {\tiny{$3$}}
        -- ++(30:1cm) node[whitenode] (x4) [label=right:$t$] {};

    \draw (3,-3) node[whitenode] (v3) [] {\tiny{3}};
    \draw (v1) edge []  node [label=left:] {} (u);
    \draw (v2) edge []  node [label=left:] {} (u);
    \draw (v3) edge []  node [label=left:] {} (u);
    \draw (x4) edge []  node [label=left:] {} (u);
    \draw (x4) edge []  node [label=left:] {} (v2);
    \draw (x3) edge []  node [label=left:] {} (v2);
    \draw (x1) edge []  node [label=left:] {} (v1);
    \draw (x2) edge []  node [label=left:] {} (v1);
\end{tikzpicture}
%\caption{\textbf{($C_5$)}}
\label{fig:cc5}
}
% \item \rconf{c:weak3degree2}{is a vertex $u$ with both a weak neighbour of degree $3$ and a neighbour of degree $2$.}
\qquad
\subfloat[][\textbf{$\rxconf{c:weak3degree2}$}]{
\centering
\begin{tikzpicture}[scale=0.95]
\tikzstyle{whitenode}=[draw,circle,fill=white,minimum size=8pt,inner sep=0pt]
\tikzstyle{blacknode}=[draw,circle,fill=black,minimum size=6pt,inner sep=0pt]
\tikzstyle{tnode}=[draw,ellipse,fill=white,minimum size=8pt,inner sep=0pt]
\tikzstyle{texte} =[fill=white, text=black]
     \draw (2,-3) node[whitenode] (u) [label=above:$u$] {}
        -- ++(60:1cm) node[whitenode] (x1) [label=right:$x$] {}
        -- ++(180:1cm) node[whitenode] (v1) [label=above:$y$] {\tiny{$3$}}
        -- ++(240:1cm) node[whitenode] (x2) [label=left:$z$] {};
    \draw (3,-3) node[whitenode] (v2) [label=right:$w$] {\tiny{2}};
    \draw (v1) edge []  node [label=left:] {} (u);
    \draw (x2) edge []  node [label=left:] {} (u);
    \draw (v2) edge []  node [label=left:] {} (u);
\end{tikzpicture}
%\caption{\textbf{($C_6$)}}
\label{fig:cc6}
}
%\item \rconf{c:8weak3degree4}{is a vertex $u$ of degree $8$ with both a weak neighbour of degree $3$ and a neighbour of degree $4$.}
\qquad
\subfloat[][\textbf{$\rxconf{c:8weak3degree4}$}]{
\centering
\begin{tikzpicture}[scale=0.95]
\tikzstyle{whitenode}=[draw,circle,fill=white,minimum size=8pt,inner sep=0pt]
\tikzstyle{blacknode}=[draw,circle,fill=black,minimum size=6pt,inner sep=0pt]
\tikzstyle{tnode}=[draw,ellipse,fill=white,minimum size=8pt,inner sep=0pt]
\tikzstyle{texte} =[fill=white, text=black]
     \draw (2,-3) node[whitenode] (u) [label=above:$u$] {\tiny{8}}
        -- ++(60:1cm) node[whitenode] (x1) [label=right:$x$] {}
        -- ++(180:1cm) node[whitenode] (v1) [label=above:$y$] {\tiny{$3$}}
        -- ++(240:1cm) node[whitenode] (x2) [label=left:$z$] {};
    \draw (3,-3) node[whitenode] (v2) [label=right:$w$] {\tiny{4}};
    \draw (v1) edge []  node [label=left:] {} (u);
    \draw (x2) edge []  node [label=left:] {} (u);
    \draw (v2) edge []  node [label=left:] {} (u);
\end{tikzpicture}
%\caption{\textbf{($C_7$)}}
\label{fig:cc7}
}
% \item \rconf{c:smalltriangles}{is a triangle $(u, v, w)$ where $d(u)+d(v)+d(w) \leq 19$ and $\min\{d(u), d(v), d(w)\} \leq 5$.}
\qquad
\subfloat[][\textbf{$\rxconf{c:smalltriangles}$}]{
\centering
\begin{tikzpicture}[scale=0.95]
\tikzstyle{whitenode}=[draw,circle,fill=white,minimum size=8pt,inner sep=0pt]
\tikzstyle{blacknode}=[draw,circle,fill=black,minimum size=6pt,inner sep=0pt]
\tikzstyle{tnode}=[draw,ellipse,fill=white,minimum size=8pt,inner sep=0pt]
\tikzstyle{texte} =[fill=white, text=black]
     \draw (2,-3) node[whitenode] (u) [label=right:$u$] {\tiny{$5$}}
        -- ++(60:1cm) node[whitenode] (v) [label=right:$v$] {\tiny{$6$}}
        -- ++(180:1cm) node[whitenode] (w) [label=left:$w$] {\tiny{$8$}};
    \draw (w) edge [] node [label=left:] {} (u);
%    \node[right=0pt] at (1.4,-3.5) {\tiny{$d(w)\leq 8$}};
\end{tikzpicture}
%\caption{\textbf{($C_8$)}}
\label{fig:cc8}
}
%nasze wymarzone C_9
\qquad
\subfloat[][\textbf{$\rxconf{c:23triangles-3}$}]{
\centering
\begin{tikzpicture}[scale=0.95]
\tikzstyle{whitenode}=[draw,circle,fill=white,minimum size=8pt,inner sep=0pt]
\tikzstyle{blacknode}=[draw,circle,fill=black,minimum size=6pt,inner sep=0pt]
\tikzstyle{tnode}=[draw,ellipse,fill=white,minimum size=8pt,inner sep=0pt]
\tikzstyle{texte} =[fill=white, text=black]
    \draw (2,-3) node[whitenode] (z) [label=left:$w$] {\tiny{3}}
        -- ++(0:1cm) node[whitenode] (u) [label=above:$u$] {}
        -- ++(320:1cm) node[whitenode] (v) [label=right:$x$] {\tiny{$2$}}
        -- ++(40:1cm) node[whitenode] (x1) [label=right:$y$] {}
        -- ++(140:1cm) node[whitenode] (w) [label=right:$z$] {\tiny{3}};
    \draw (x1) edge [] node [label=left:] {} (u);
    \draw (w) edge [] node [label=left:] {} (u);
\end{tikzpicture}
%\caption{\textbf{($C_9$)}}
\label{fig:cc9}
}
\qquad
\subfloat[][\textbf{\rxconf{c:2neighbour33bigface}}]{
\centering
\begin{tikzpicture}[scale=0.95]
\tikzstyle{whitenode}=[draw,circle,fill=white,minimum size=8pt,inner sep=0pt]
\tikzstyle{blacknode}=[draw,circle,fill=black,minimum size=6pt,inner sep=0pt]
\tikzstyle{tnode}=[draw,ellipse,fill=white,minimum size=8pt,inner sep=0pt]
\tikzstyle{texte} =[fill=white, text=black]
    \draw (1,-3) node[whitenode] (v) [label=left:$v$] {\tiny{2}}
        -- ++(0:1cm) node[whitenode] (u) [label=above:$u$] {}
        -- ++(60:0.87cm) node[whitenode] (x1) [label=above:$x$] {}
        -- ++(330:0.5cm) node[whitenode] (x2) [label=right:$y$] {\tiny{$3$}}
        -- ++(330:1cm) node[whitenode] (x3) [label=right:$z$] {}
        -- ++(210:1cm) node[whitenode] (x4) [label=right:$s$] {\tiny{$3$}}
        -- ++(210:0.5cm) node[whitenode] (x5) [label=below:$t$] {};
    \draw (u) edge [] node [label=left:] {} (x2);
    \draw (u) edge [] node [label=left:] {} (x5);
    \draw (u) edge [] node [label=left:] {} (x4);
\end{tikzpicture}
%\caption{\textbf{($C_11$)}}
\label{fig:cc10}
}
\caption{\label{fig:config} Reducible configurations. The vertices drawn above are pairwise different.}
\end{figure}
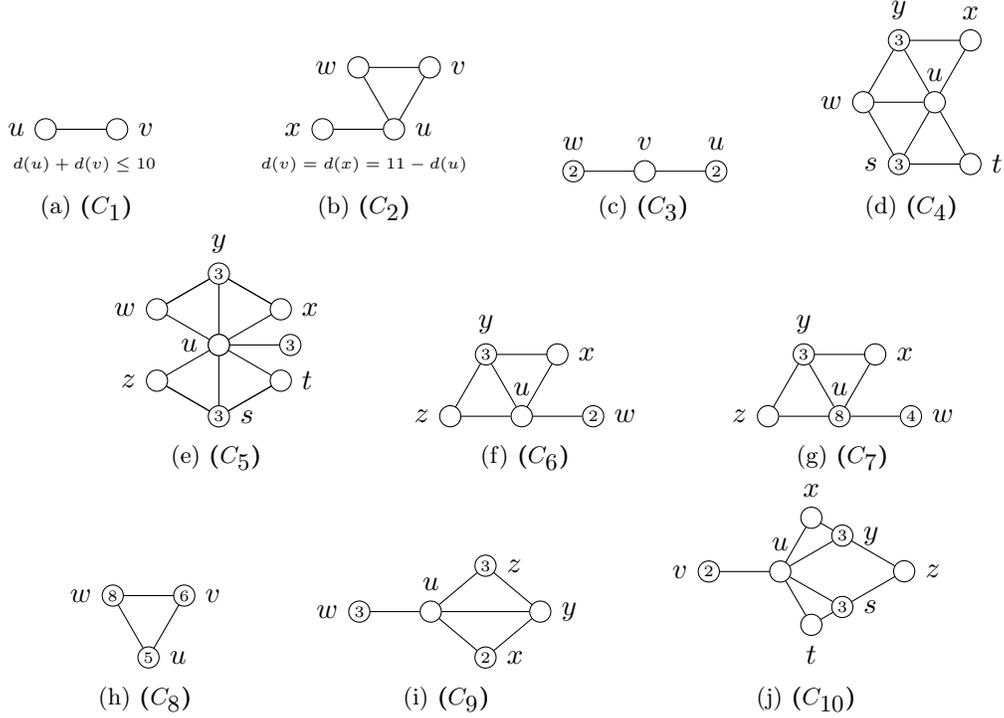

We stress here that configurations are defined as graphs (think: subgraphs of $G \in \G$) and not as plane embeddings of graphs. Hence, for example when we write that $G$ contains \rxconf{c:kite} it does not imply that the triangle $(u,v,w)$ bounds a triangular face.

%A configuration appears in $G$ when 1) the graph in the corresponding figure is a subgraph of $G$, and 2) if a vertex has a number $d$ on it in the figure, its degree in G is at most $d$.

We now present human-made proofs that configurations \rxconf{c:edge} and  \rxconf{c:2degree2neighbours} are reducible. In these proofs we use some common notation as follows.
We treat partitions to linear forests and a matching as colourings, i.e., functions $c: E(G)\rightarrow \{0,\ldots , 4\}$ such that for $i=1, \ldots , 4$ we have that $c^{-1}(i)$ is a linear forest and $c^{-1}(0)$ is a matching. By $d_i(c,v)$ we denote the number of edges which are incident with $v$ and coloured with $i$ in the colouring $c$.

\begin{lemma}\label{lem:c1}
	For every $G \in \G$, graph $G$ does not contain configuration \rxconf{c:edge}.
\end{lemma}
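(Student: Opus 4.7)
The plan is a standard minimality-based reducibility argument. Assume for contradiction that some $G \in \G$ contains an edge $uv$ with $d(u) + d(v) \leq 10$, and consider $G' := G - uv$. Since $G'$ has strictly fewer edges than $G$, by minimality $G'$ admits a valid colouring $c' \colon E(G') \to \{0,1,2,3,4\}$. The aim is to extend $c'$ to $G$ by assigning some colour $c^* \in \{0,\dots,4\}$ to $uv$, contradicting $G \in \G$.

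To choose $c^*$, I would count the colours that are unavailable. Call a colour $i$ \emph{blocked} at $x \in \{u,v\}$ if assigning $i$ to $uv$ would immediately violate the corresponding degree constraint, i.e., $d_i(c', x) \geq 2$ for $i \geq 1$ or $d_0(c',x) \geq 1$ for $i=0$. Let $B_u, B_v$ denote the sets of blocked colours. Since a blocked colour $i \geq 1$ consumes at least $2$ of the $d(x)-1$ edges of $x$ in $G'$, while blocked colour $0$ consumes at least $1$, one obtains $|B_x| \leq \lfloor d(x)/2\rfloor$, and hence $|B_u| + |B_v| \leq \lfloor d(u)/2\rfloor + \lfloor d(v)/2\rfloor \leq 5$. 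This bound is saturated only for $(d(u),d(v)) \in \{(2,8),(4,6),(6,4),(8,2)\}$, and in each such case a short check of the edge budget forces $0 \in B_u \cap B_v$ (packing $|B_v|$ non-zero blocked colours alone would overshoot the edges at $v$). Therefore $|B_u \cup B_v| \leq 4$ in all cases, and a colour $c^* \in \{0,\dots,4\} \setminus (B_u \cup B_v)$ always exists.

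The main obstacle is the cycle condition for linear-forest colours: even if $c^* \geq 1$ avoids both blocked sets, assigning $c^*$ to $uv$ could still fail when $d_{c^*}(c', u) = d_{c^*}(c', v) = 1$ and $u, v$ lie in the same component of the colour-$c^*$ linear forest. The plan for handling this is to reuse the counting inequality: whenever $|B_u|$ attains $\lfloor d(u)/2\rfloor$, the inequality is tight and so \emph{every} edge incident to $u$ in $G'$ must carry a colour from $B_u$, which implies $d_{c^*}(c', u) = 0$ for every $c^* \notin B_u$ and rules out a cycle at $u$; symmetrically for $v$. A brief case analysis on $(|B_u|,|B_v|)$ under the constraint $d(u)+d(v)\leq 10$ then shows that in every scenario the chosen $c^*$ satisfies $d_{c^*}(c', u)=0$ or $d_{c^*}(c', v)=0$, so no cycle arises. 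Consequently $c'$ extends to a valid colouring of $G$, yielding the desired contradiction.
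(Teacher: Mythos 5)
Your overall strategy (delete $uv$, colour $G-uv$ by minimality, extend the colouring) is the same as the paper's, but the extension step has a genuine gap in the cycle-avoidance part. First, the auxiliary claim that saturation $|B_u|=\lfloor d(u)/2\rfloor$ forces every edge at $u$ in $G'$ to carry a blocked colour is false when $d(u)$ is odd: with $d(u)=5$ the four edges at $u$ may be coloured $0,1,1,2$, so $B_u=\{0,1\}$ has size $\lfloor 5/2\rfloor=2$, yet colour $2$ is unblocked and $d_2(c',u)=1$. Second, and more seriously, the ``brief case analysis on $(|B_u|,|B_v|)$'' that is supposed to guarantee $d_{c^*}(c',u)=0$ or $d_{c^*}(c',v)=0$ cannot be carried out from that data alone: when neither endpoint is saturated, an unblocked colour can still have multiplicity exactly $1$ at both ends. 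For instance, with $d(u)=2$ and $d(v)=8$, the single edge at $u$ coloured $1$ and the seven edges at $v$ coloured $1,2,2,3,3,4,4$, the unblocked colours are $\{0,1\}$; choosing $c^*=1$ may close a monochromatic cycle if $u$ and $v$ lie on the same $1$-coloured path, so the safe choice is forced to be $0$, and nothing in your bookkeeping detects this. The pair $(|B_u|,|B_v|)$ records only which colours have multiplicity at least $2$ (or at least $1$ for colour $0$) and discards precisely the multiplicity-$1$ information that the cycle condition depends on.

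What closes the gap is a direct count, which is how the paper argues: if some $i\in\{1,2,3,4\}$ satisfies $d_i(c',u)+d_i(c',v)\le 1$, then $i$ has multiplicity $0$ at one endpoint, so colouring $uv$ with $i$ neither violates the degree bounds nor closes a cycle; otherwise all four sums are at least $2$, and since $d_{G'}(u)+d_{G'}(v)\le 8$ they are all exactly $2$ and account for every edge at $u$ and $v$, hence $d_0(c',u)=d_0(c',v)=0$ and $uv$ can safely get colour $0$ (a matching edge cannot create a cycle). Your blocked-colour counting for the existence of an available colour is correct but is subsumed by, and does not replace, this dichotomy; as written, the proposal does not establish the lemma.
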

\begin{proof}
	Assume for a contradiction that there is an edge $uv$ such that $d(u)+d(v)\le 10$.
	Let $G'$ be the planar graph obtained from $G$ by removing edge $uv$.
	Since $G$ is a minimal counterexample, there is a colouring $c'$ of $G'$.

	Now we extend $c'$ to a colouring $c$ of $G$ to get a contradiction.
	There are two cases to consider.
	If there is a colour $i\in\{1,2,3,4\}$ such that $d_i(c',u)+d_i(c',v)\le 1$ we put $c(uv)=i$. Then $d_i(c,u) = 1$ and $d_i(c,v) \le 2$ or vice versa, so the edges of colour $i$ still form a collection of paths, and $c$ is as desired.
	Hence we can  assume that for every colour $i\in\{1,2,3,4\}$ we have $d_i(c',u)+d_i(c',v)=2$. Then, since $d_{G'}(u)+d_{G'}(v)\le 8$ we get that $d_0(c',u)=d_0(c',v)=0$. We simply put $c(uv)=0$.
\end{proof}

\begin{corollary}\label{lem:no1degree}
	For every $G \in \G$, graph $G$ does not contain $1$-vertices.
\end{corollary}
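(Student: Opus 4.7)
The plan is to derive Corollary~\ref{lem:no1degree} as an almost immediate consequence of Lemma~\ref{lem:c1}. Suppose for contradiction that $G \in \G$ contains a $1$-vertex $v$, and let $u$ be its unique neighbour. Since $G$ has maximum degree at most $9$, we have $d(u) \le 9$, and together with $d(v)=1$ this yields $d(u)+d(v) \le 10$. The edge $uv$ therefore realises configuration \rxconf{c:edge}, contradicting Lemma~\ref{lem:c1}.

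So the only step is to verify the numerical inequality $d(u)+d(v) \le 10$ using the global degree bound assumed on members of $\G$. No case analysis or discharging is needed, and there is no real obstacle; the statement is recorded as a separate corollary purely because the fact ``$G$ has no leaves'' will be used repeatedly in later arguments (in particular when reasoning about configurations involving $2$-vertices, triangles, and face-degree bookkeeping in the discharging phase).
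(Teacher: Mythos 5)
Your argument is correct and is exactly the paper's proof, merely spelled out: the paper also derives the corollary immediately from Lemma~\ref{lem:c1} using $\Delta(G)\le 9$, so that the edge from a $1$-vertex to its unique neighbour satisfies $d(u)+d(v)\le 10$ and forms configuration \rxconf{c:edge}. Nothing further is needed.
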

\begin{proof}
  Immediate from Lemma~\ref{lem:c1}, since $\Delta(G)\le 9$.
\end{proof}

% czy tu nie dodać drugiej konfiguracji z połączonymi dwoma wierzchołkami????
\begin{lemma}\label{lem:c3}
		For every $G \in \G$, if graph $G$ does not contain \rxconf{c:kite}, then $G$ does not contain configuration~\rxconf{c:2degree2neighbours}.
\end{lemma}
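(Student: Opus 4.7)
Suppose for contradiction that some $G \in \mathcal{G}$ avoids configuration $\rxconf{c:kite}$ but contains a vertex $v$ with two $2$-neighbours $u_1, u_2$, and write $u_i'$ for the other neighbour of $u_i$. I begin with Lemma~\ref{lem:c1} applied to the edges $u_iv$ and $u_iu_i'$, which forces $d(v) = d(u_1') = d(u_2') = 9$.

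I then extract a structural consequence of the no-kite hypothesis. Because each $u_i$ has degree $2$, its two edges $u_iv$ and $u_iu_i'$ are consecutive on every face incident to $u_i$, so whenever $vu_i' \in E(G)$ the triple $(v, u_i, u_i')$ is a triangular segment. Together with the other $2$-neighbour $u_{3-i}$ this realises $\rxconf{c:kite}$ with apex $v$ -- the degrees match since $d(u_i) = d(u_{3-i}) = 2 = 11 - d(v)$ -- unless $u_{3-i} = u_i'$. Hence for each $i \in \{1,2\}$ the hypothesis forces $vu_i' \notin E(G)$ or $u_{3-i} = u_i'$, splitting the situation into two cases: either (Case~B) $u_1u_2 \in E(G)$, equivalently $u_2 = u_1'$ and $u_1 = u_2'$ so that $(v, u_1, u_2)$ is a triangle; or (Case~A) $u_1u_2 \notin E(G)$, in which case neither $vu_1'$ nor $vu_2'$ is an edge of $G$.

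Case~B I would handle directly: remove the edge $u_1u_2$ and take a valid colouring $c'$ of $G - u_1u_2$ (guaranteed by minimality of $G$), and write $a_i := c'(u_iv)$. In $G - u_1u_2$ each $u_i$ has degree $1$, so the only obstructions to extending $c'$ by colouring $u_1u_2$ are that the matching colour $0$ is blocked whenever $a_1 = 0$ or $a_2 = 0$, and a forest colour $c$ creates a cycle only when $c = a_1 = a_2$ (so that $u_1v$ and $u_2v$ already lie in the same colour-$c$ component through $v$). At most two of the five colours are forbidden, so a valid choice for $c(u_1u_2)$ remains -- contradicting that $G$ is a counterexample.

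For Case~A I would delete the edge $u_1v$, take a valid colouring $c'$ of $G' := G - u_1v$, and extend by assigning a colour to $u_1v$. Since $v$ has degree $8$ in $G'$, its colour-multiplicity vector at $v$ is (up to forest-colour relabelling) either $(0, 2, 2, 2, 2)$ or $(1, 2, 2, 2, 1)$. Writing $\alpha := c'(u_1u_1')$, a sub-case analysis over the two profiles, the value of $\alpha$, and the colour-$\alpha$ connectivity of $v$ and $u_1'$ in $c'$, either supplies a legal colour for $u_1v$ or reduces to the hard subcase where $d_0(v) = 0$ and $\alpha = 0$. There I would use the second $2$-neighbour $u_2$: set $\beta := c'(u_2v)$, which is a forest colour since $d_0(v) = 0$; uncolour $u_2v$; assign $c(u_1v) := \beta$; and re-colour $u_2v$ using the freed capacity. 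This finishes with $c(u_2v) = 0$ unless $c'(u_2u_2') = 0$ as well; in that remaining subcase both $u_1'$ and $u_2'$ are colour-saturated (matching degree $1$, every forest degree $2$), and I would finish by a Kempe-type swap at $u_1'$ (or symmetrically at $u_2'$), trading $c(u_1u_1')$ from $0$ to some forest colour $\delta$ against an edge $u_1's$ of colour $\delta$ with $d_0(c', s) = 0$. The main obstacle is showing that at least one such swap is always available: if every swap option fails, the matching in $c'$ must cover every forest-neighbour of both $u_1'$ and $u_2'$, and I expect a counting argument exploiting the Case~A restriction $vu_1', vu_2' \notin E(G)$ together with the degree saturation at $v, u_1', u_2'$ to produce the final contradiction.
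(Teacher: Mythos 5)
Your opening reductions are fine: Lemma~\ref{lem:c1} gives $d(v)=d(u_1')=d(u_2')=9$, the no-\rxconf{c:kite} hypothesis gives $vu_1',vu_2'\notin E(G)$, and your Case~B is in fact vacuous, since an edge between two $2$-vertices is already configuration \rxconf{c:edge}. The problems are in Case~A. First, a local inaccuracy: in the profile $(1,2,2,2,1)$ at $v$ the only admissible colour for $u_1v$ is the deficient forest colour $j$, and the extension also fails when $\alpha=j$ and $u_1'$ is joined to $v$ by a $j$-coloured path in $c'$; so the analysis does not reduce only to the subcase $d_0(c',v)=0$, $\alpha=0$ that you go on to treat. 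Second, and decisively, the final step is not a proof: you yourself flag that the availability of the Kempe-type swap at $u_1'$ (or $u_2'$) is the main obstacle and only ``expect'' a counting argument. No such local contradiction is available. A colouring of $G-u_1v$ in which $v$ is saturated with profile $(0,2,2,2,2)$, both $u_1u_1'$ and $u_2u_2'$ are matching edges, and every other neighbour of $v$, $u_1'$ and $u_2'$ is covered by the matching is consistent with all the information you control (degrees, the non-adjacencies $vu_1',vu_2'\notin E(G)$, saturation at $v,u_1',u_2'$); excluding it would require controlling the colouring far away from the configuration. Indeed, the paper remarks in Section~\ref{sec:computer} that \rxconf{c:2degree2neighbours} cannot be reduced by the delete-one-edge-and-recolour scheme at all, which is essentially the strategy you are pursuing, augmented by swaps whose existence you cannot guarantee.

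The paper avoids this wall by a degree-preserving surgery rather than an extension argument. With $x,y$ the two $2$-neighbours and $a,b$ their other neighbours, it colours $G'=(G-x)\cup\{av\}$ when $a=b$, and $G'=G\setminus\{x,y\}\cup\{av,vb\}$ when $a\neq b$; the no-\rxconf{c:kite} hypothesis is used exactly to guarantee that $G'$ is simple. Because the colour multisets at $v$, $a$, $b$ are unchanged, pulling the colouring back is a matter of reassigning the colours of the added edges along the subdivided paths (e.g.\ $c(ax)=\alpha$, $c(xv)=\beta$, $c(vy)=\alpha$, $c(yb)=\beta$ in the case $a\neq b$), after which only monochromatic cycles through $x$ and $y$ must be excluded, a short check. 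To complete your argument you would need either to adopt such a surgery or to supply a genuinely global argument for your residual case; as written, the proof is incomplete.
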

\begin{proof}

	\begin{figure}[ht]
		\centering
		\subfloat[][]{
			\centering
			\begin{tikzpicture}[scale=0.95]
			\tikzstyle{whitenode}=[draw,circle,fill=white,minimum size=8pt,inner sep=0pt]
			\tikzstyle{blacknode}=[draw,circle,fill=black,minimum size=6pt,inner sep=0pt]
			\tikzstyle{tnode}=[draw,ellipse,fill=white,minimum size=8pt,inner sep=0pt]
			\tikzstyle{texte}=[fill=white, text=black]
			\draw (2,-3) node[whitenode] (v) [label=right:$v$] {}
			-- ++(60:1cm) node[whitenode] (y) [label=right:$y$] {\tiny{$2$}}
			-- ++(120:1cm) node[whitenode] (a) [] {}
			-- ++(240:1cm) node[whitenode] (x) [label=left:$x$] {\tiny{$2$}};
			\draw (x) edge []  node [label=left:] {} (v);
			\node[right=0pt] at (2.2,-1.2) {$a=b$};
			\end{tikzpicture}
			%\caption{\textbf{($C_2$)}}
			\label{fig:claim3case1}
		}
		\subfloat[][]{
			\centering
			\begin{tikzpicture}[scale=0.95]
			\tikzstyle{whitenode}=[draw,circle,fill=white,minimum size=8pt,inner sep=0pt]
			\tikzstyle{blacknode}=[draw,circle,fill=black,minimum size=6pt,inner sep=0pt]
			\tikzstyle{tnode}=[draw,ellipse,fill=white,minimum size=8pt,inner sep=0pt]
			\tikzstyle{texte}=[fill=white, text=black]
			\draw (0,-3) node[whitenode] (a) [label=below:$a$] {}
			-- ++(360:1cm) node[whitenode] (x) [label=below:$x$] {\tiny{$2$}}
			-- ++(360:1cm) node[whitenode] (v) [label=below:$v$] {}
			-- ++(360:1cm) node[whitenode] (y) [label=below:$y$] {\tiny{$2$}}
			-- ++(360:1cm) node[whitenode] (b) [label=below:$b$] {};
			\end{tikzpicture}
			%\caption{\textbf{($C_2$)}}
			\label{fig:claim3case2}
		}
		\caption{\label{fig:config3} Two possible cases when configuration \rxconf{c:2degree2neighbours} appears.}
	\end{figure}
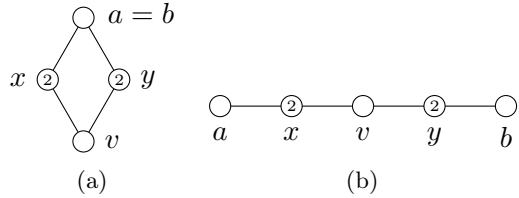

	Suppose for a contradiction that a vertex $v$ has two $2$-neighbours namely $x$ and $y$. Let $a$ (resp. $b$) be the neighbour of $x$ (resp. $y$) different than $v$. Note that $a$ may coincide with $b$ (see Fig.~\ref{fig:config3}).

	We consider two cases.

	\begin{enumerate}
		\item $a = b$

		Consider $G' = (G - x) \cup \{av\}$.  Note that $d_{G'}(v)=d_G(v) = 9$, as \rxconf{c:edge} is excluded in $G$. Moreover, $G'$ is simple, because if $av\in E(G)$, then $G$ contains \rxconf{c:kite} (triangle $(v, x, a)$ and edge $vy$). By the minimality of $G$ there exists a colouring $c'$ of $G'$. Now we define a colouring $c$ of $G$. For every edge $e\in E(G) \setminus \{ ax,\ ay,\ vx,\ vy\}$ we put $c(e) = c'(e)$.
		Let $\alpha$, $\beta$, $\gamma$ be the colours in $c'$ of $ay$, $yv$, $av$ respectively.
		We put $c(ay)=\alpha $, $c(yv)=\gamma$, $c(ax) = \gamma$ and $c(xv) = \beta$. Note that $d_i(c',v) = d_i(c, v)$ and $d_i(c', a) = d_i(c,a)$ for $i = 0, \ldots, 4$. Moreover, $d_i(c,x)$, $d_i(c,y)$ are not greater than $2$ for $i = 1,\ 2,\ 3,\ 4$ and $d_0(c,x)$, $d_0(c,y)$ are not greater than $1$, as otherwise $c'$ contains incident edges coloured with $0$.
		Hence, it suffices to show that there is no monochromatic cycle in $c$.

		Assume there is a monochromatic cycle $C$ in $c$.
		Then $C$ contains one of the edges of $ax$, $ay$, $bx$, $by$, because otherwise $C$ is monochromatic in $c'$, a contradiction.
		Since $d_G(x)=2$ and $d_G(y)=2$, it follows that $C$ contains the path $axv$ or $ayv$.
		By symmetry assume the former.
		It follows that $\beta=\gamma$ and $C$ is coloured by $\gamma$.
		Then $C$ contains $vy$ (because $vy$ is coloured by $\gamma$).
		Since $d_G(y)=2$, also $ya \in C$.
		Hence, $\alpha=\gamma$.
		Thus, we obtained that $\alpha=\beta=\gamma$ and the triangle $ayv$ was monochromatic in $c'$, a contradiction.

		\item $a \neq b$
		
        Since we excluded configuration \rxconf{c:edge}, $d(v) = 9$.
		Note that $G$ contains neither edge $av$ nor $vb$ as we excluded configuration \rxconf{c:kite}. Consider a simple graph $G' = G\setminus \{x,y\} \cup \{av,\ vb\} $. By the minimality of $G$ there exists a colouring $c'$ of $G'$. Now we define a colouring of $G$. For every edge  $e\in E(G) \setminus \{ ax,\ xv,\ vy,\ yb\}$ we put $c(e) = c'(e)$.
		Let $\alpha$ and $\beta$ be the colours of $av$ and $vb$ respectively.
		We can put $c(ax)=\alpha,\ c(xv)=\beta,\ c(vy)=\alpha,\ c(yb)=\beta$. Note that $d_i(c',v) = d_i(c,v),\ d_i(c', a) = d_i(c, a),\ d_i(c', b) = d_i(c, b)$ for $i = 0,\ldots, 4$. Moreover, $d_i(c,x),\ d_i(c,y)$ are not greater than $2$ for $ i = 1,\ 2,\ 3,\ 4$ and $d_0(c,x),\ d_0(c,y)$ are not greater than $1$, otherwise $c'$ contains incident edges coloured with $0$.

		We also claim that there is no monochromatic cycle in $c$. Indeed, if there is a cycle formed by edges in colour $\alpha$ (resp. $\beta$), then it goes through $x$ and $y$, so $\alpha = \beta$ and there is a monochromatic cycle in $c'$, a contradiction.
	\end{enumerate}

\end{proof}

\begin{lemma}\label{lem:config}
	For every $G \in \G$, graph $G$ does not contain any of configurations \rxconf{c:kite} and \rxconf{c:2consweak3neighbours} to \rxconf{c:2neighbour33bigface}.
\end{lemma}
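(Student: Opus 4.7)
The plan is to treat each of the nine configurations via the standard reducibility scheme already used in Lemmas~\ref{lem:c1} and~\ref{lem:c3}. For a fixed configuration $C$, assume for contradiction that some $G \in \G$ contains $C$. I will build a strictly smaller planar simple graph $G'$ with $\Delta(G') \le 9$, typically by deleting a small set of low-degree vertices or edges clustered around the ``central'' vertex $u$ of $C$ and, if necessary, adding a few surrogate edges between former neighbours of the deleted vertices in order to force those neighbours to receive distinct colours in the extension. By minimality of $G$, the graph $G'$ admits a valid colouring $c'$ into four linear forests and a matching. The goal is then to extend $c'$ to a colouring $c$ of $G$, contradicting $G \in \G$. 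When treating the $k$-th configuration on the list, I may also assume that none of the previously listed configurations appears in $G$; in particular, Lemma~\ref{lem:c1} already forces every edge of $G$ to join two vertices of degree sum at least $11$, which rules out many degenerate local patterns in the reductions.

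The main difficulty is that extending $c'$ is not a purely local assignment: at each affected vertex $v$ the new colouring $c$ must simultaneously satisfy (i) at most one edge of colour $0$ at $v$, (ii) at most two edges of each colour in $\{1,\dots,4\}$ at $v$, and (iii) no monochromatic cycle in colours $1$ through $4$. Constraint (iii) is global, but since only the newly coloured edges are added, any monochromatic cycle in $c$ must traverse the boundary of the modified region, so its existence can be read off from $c'$ restricted to edges incident to the boundary vertices together with, for each forest colour $i$, the information of which pairs of those edges already lie on a common $c'$-path. This observation reduces the extension problem to a finite constraint-satisfaction instance whose input is that local pre-colouring data.

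For the kite configuration \rxconf{c:kite}, the equality $d(v)=d(x)=11-d(u)$ suggests deleting $v$ (and possibly the triangle edge $uw$) and then reinserting and recolouring the removed edges in the spirit of the $a=b$ case of Lemma~\ref{lem:c3}, using the triangle $uvw$ together with the symmetric neighbour $x$ of $u$ to rule out the monochromatic cycles that could arise. The configurations \rxconf{c:2consweak3neighbours}--\rxconf{c:2neighbour33bigface} all fit the pattern of Lemma~\ref{lem:c3}: identify a cluster of $2$- and $3$-neighbours around a central vertex $u$, delete that cluster, add surrogate edges between selected ``outer'' neighbours to pre-commit those edges to receive distinct colours in $c'$, invoke minimality, and then try to extend by assigning each removed edge a colour chosen from the colours already used on the incident edges of $c'$.

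The tedious part is showing that \emph{every} boundary pre-colouring admits a valid extension. For the simplest configurations this can in principle be done by hand, but for \rxconf{c:2weak3neighbours3neighbour}, \rxconf{c:23triangles-3}, and \rxconf{c:2neighbour33bigface} the number of pre-colouring patterns is already in the thousands, making a human case analysis impractical. Accordingly, I plan to encode ``extendability of a pre-colouring'' as a constraint-satisfaction problem in the colours of the deleted edges and verify it by computer, for each of the nine configurations in turn, as described in Section~\ref{sec:computer}. The key subtlety here, and what I expect to be the main obstacle, is to encode the relevant part of $c'$ faithfully: one must track not only the coloured degrees $d_i(c',v)$ at each boundary vertex but also, for each forest colour $i$, the pairing of $i$-coloured edges at that vertex into path-segments of $c'$, since whether a new edge closes a monochromatic cycle depends precisely on that pairing.
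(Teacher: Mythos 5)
Your core verification scheme is exactly the paper's: reduce, invoke minimality to colour the smaller graph, and have a computer enumerate all possible boundary data -- the multiset of colours on the outside edges at each configuration vertex together with, for each forest colour $i$, which pairs of those edges/vertices are already joined by an outside $i$-coloured path -- and check that every such class extends. Where you diverge is in the reduction itself. The paper's standard proof is deliberately minimal and uniform: for every configuration of this lemma it deletes a \emph{single} edge $e_H$ of the configuration, keeps all vertices, and recolours all of $E(H)$; no vertex clusters are deleted and no surrogate edges are added. The Lemma~\ref{lem:c3}-style surgery (delete $2$-vertices, add edges like $av$, $vb$) is reserved precisely for \rxconf{c:2degree2neighbours}, which the paper notes \emph{cannot} be handled by the standard scheme and which is therefore excluded from this lemma and proved by hand. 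Your heavier surgery is thus unnecessary for \rxconf{c:kite} and \rxconf{c:2consweak3neighbours}--\rxconf{c:2neighbour33bigface}, and it creates obligations your sketch does not discharge: after adding surrogate edges you must verify that $G'$ is simple (in Lemma~\ref{lem:c3} this itself required \rxconf{c:kite} to be excluded, a luxury you do not have when \rxconf{c:kite} is the configuration being reduced), that $\Delta(G')\le 9$, that $G'$ is planar with fewer edges, and that the colours of the surrogate edges are faithfully incorporated into the boundary data the computer enumerates. Also note that the single-edge reduction needs no assumption that earlier configurations are absent, whereas your ordering assumption is sound only if \rxconf{c:kite} is treated first, since the exclusion of \rxconf{c:2degree2neighbours} (Lemma~\ref{lem:c3}) is conditional on \rxconf{c:kite}. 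In short: same computer-assisted method and the right bookkeeping (your path-pairing data is the paper's $\Pp$), but the paper's simpler edge-deletion reduction suffices and avoids the extra well-formedness checks your proposal would force you to carry out; one further detail worth keeping from the paper is that a class $(\Cc,\Pp)$ need only be checked if it is consistent with at least one colouring of the inner edges $E(H)\setminus\{e_H\}$.
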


\begin{proof}
This proof is done by a computer program, see Section~\ref{sec:computer} for details.
\end{proof}

\subsection{A minimal counterexample does not exist}
{\bf Initial charge}. We consider a planar embedding $\mathcal{M}=(V,E,F)$ of $G \in \G$ and attribute  a charge $\ch(v)=d(v)-4$ to each vertex $v$ and a charge $\ch(f)=\ell (f)-4$ to each face $f$.

\vspace{3mm}
\noindent {\bf Discharging rules} We introduce four discharging rules as follows:
\begin{itemize}
\item \rrule{r:degree2}{applies to a vertex $u$ with a neighbour $v$ of degree $2$. Then $u$ sends $1$ to $v$.}
\item \rrule{r:degree3}{applies to a vertex $u$ with a neighbour $v$ of degree $3$. Then $u$ sends $\tfrac13$ to $v$.}
\item \rrule{r:bigface}{applies to a face $f$ of length $5^+$ and its segment $(x,y,z)$. If $d(x)=d(z)=3$, then $f$ sends $\tfrac23$ to $y$ through $(x,y,z)$.
	If $d(x)=2$ or $d(z)=2$, then $f$ sends $\tfrac12$ to $y$ through $(x,y,z)$.}
\item \rrule{r:triangle}{applies to a vertex $u$ incident to a triangular face $f = (v,u,w)$. Then $u$ sends $m(d(v),d(u),d(w))$ through segment $(v, u, w)$ to $f$, as described in Table~\ref{table:triangle}.}
\end{itemize}

In \rxrule{r:bigface} we send a charge from a face to segments and then they pass the charge to vertices.
This may look overcomplicated at the moment, but will become handy in the proof.

\tabulinesep=1.5mm
\begin{table}
\centering
\begin{tabu}{c|c|c|c|c|c|c|c|}
\cline{2-8}
  & \multicolumn{1}{ c  }{\multirow{3}{*}{$a \leq 4$}} & \multicolumn{1}{ |c  }{\multirow{3}{*}{$a=5$}} & \multicolumn{1}{ |c | }{\multirow{3}{*}{$a =6$}} & \multicolumn{4}{c|}{$a \geq 7$}\\
  \cline{5-8}
 & \multicolumn{1}{ c  }{} & \multicolumn{1}{ |c  }{} & \multicolumn{1}{ |c | }{} & \multicolumn{1}{c|}{\multirow{2}{*}{$b\leq 4$}} & \multicolumn{2}{c|}{$b=5$} & \multicolumn{1}{ c|  }{\multirow{2}{*}{$b \geq 6$}} \\
 \cline{6-7}
 & \multicolumn{1}{ c  }{} & \multicolumn{1}{ |c  }{} & \multicolumn{1}{ |c  }{} & \multicolumn{1}{ |c|  }{} & $c=6$ & $c \geq 7$ & \multicolumn{1}{ c | }{} \\
 \hline
 \multicolumn{1}{ |c | }{$m(b,a,c)$} & $0$ & $\tfrac{1}{5}$ & $\tfrac13$ & $\tfrac12$ &  $\tfrac{7}{15}$ & $\tfrac25$ & $\tfrac13$ \\
 \hline
\end{tabu}
\caption{\label{table:triangle}A table to define the weight redistribution by Rule~\ref{r:triangle}. We consider $b \leq c$ and set $m(c,a,b)=m(b,a,c)$. Note that the case $b=c=5$ is not used in Rule~\ref{r:triangle}, because this would mean that edge $vw$ contradicts Lemma~\ref{lem:c1}.}
\end{table}

% Note that $m(a,b,c)$ is so designed that for every triple $(a,b,c)$ such that the sum of each pair is at least $1$, we have $m(a,b,c)+m(b,a,c)+m(c,a,b)=1$. %uzasadnic

In what follows, we will show that for every face and for every vertex, the final charge, after applying the rules, is non-negative.

\begin{lemma}\label{l:facescharge}
  Let $G$ be a graph belonging to the family $\G$ and let $\mathcal{M}$ be its planar embedding. Then, after applying the discharging rules to $G$, for every face $f$ of $\mathcal{M}$ its final charge is non-negative.
\end{lemma}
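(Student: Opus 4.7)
The plan is to analyse each face by its length $\ell(f)$, starting from the initial charge $\ell(f)-4$, and split into three cases: $\ell(f)=3$, $\ell(f)=4$, and $\ell(f)\geq 5$. For triangular faces only Rule~\ref{r:triangle} acts, for $4$-faces no rule does, and for $\ell(f)\geq 5$ only Rule~\ref{r:bigface} acts.

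For $\ell(f)=3$, the initial charge is $-1$ and $f$ receives one summand $m$ from each of the three incident vertices. I sort the triangle's degrees as $a\leq b\leq c$. Lemma~\ref{lem:c1} (absence of $\rxconf{c:edge}$) gives $a+b\geq 11$, hence $b\geq 6$, and Lemma~\ref{lem:config} (absence of $\rxconf{c:smalltriangles}$) excludes the simultaneous inequalities $a\leq 5$, $b\leq 6$, $c\leq 8$. The plan is to enumerate the surviving combinations, following the ranges of Table~\ref{table:triangle}: the subcase $a\geq 7$ gives three summands of $1/3$; the subcase $a=6$ still gives three summands of $1/3$; the subcase $a=5$ splits into $b=6$ (forcing $c\geq 9$, with summands $1/5+1/3+7/15$) and $b\geq 7$ (summands $1/5+2/5+2/5$); and $a\leq 4$ forces $b,c\geq 7$, giving $0+1/2+1/2$. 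In every case the three summands total exactly $1$, cancelling the initial $-1$.

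For $\ell(f)=4$ no rule touches $f$, so its final charge remains $0$. For $\ell(f)\geq 5$, Rule~\ref{r:bigface} sends $2/3$ per segment whose two side vertices both have degree $3$ and $1/2$ per segment with a degree-$2$ side vertex; the plan is to bound the total dispatched charge by $\ell(f)-4$. Lemma~\ref{lem:c1} forbids two adjacent degree-$2$ vertices and two adjacent degree-$3$ vertices, so $3$-vertices on the walk are non-adjacent and the middles of $2/3$-segments form an independent set on the facial cycle, giving at most $\lfloor\ell(f)/2\rfloor$ such segments. Similarly, Lemma~\ref{lem:c3} (absence of $\rxconf{c:2degree2neighbours}$) places $2$-vertices at pairwise walk-distance at least $3$, so at most $\lfloor\ell(f)/3\rfloor$ of them and hence at most $2\lfloor\ell(f)/3\rfloor$ segments of weight $1/2$. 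For $\ell(f)\geq 12$ the combined bound $\tfrac23\lfloor\ell(f)/2\rfloor+\lfloor\ell(f)/3\rfloor\leq\ell(f)-4$ is immediate. For $5\leq\ell(f)\leq 11$ I plan to refine using the further configurations of Lemma~\ref{lem:config}, in particular $\rxconf{c:2consweak3neighbours}$ and $\rxconf{c:2neighbour33bigface}$, which forbid dense clusters of degree-$2$ and degree-$3$ neighbours around a single vertex; in each small case the admissible patterns of low-degree vertices along the walk can be enumerated and the bound checked directly (for instance, for $\ell(f)=5$, two $2/3$-segments would force two adjacent degree-$3$ vertices, so only one such segment is possible and charge $2/3\leq 1$ suffices).

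The main obstacle is precisely the small-$\ell(f)$ cases, especially $\ell(f)=5$, where the initial charge is just $1$ while a naive count would permit dispatching up to $10/3$. The whole argument there hinges on showing that the reducible configurations of Lemma~\ref{lem:config} confine $2$- and $3$-vertices to a very restricted pattern along any short facial walk, leaving little room for discharging.
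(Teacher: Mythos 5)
Your treatment of $\ell(f)=3$ (case analysis on the sorted degree sequence using \rxconf{c:edge} and \rxconf{c:smalltriangles}, each triangle receiving exactly $1$), of $\ell(f)=4$, and essentially of $\ell(f)=5$ matches the paper's proof. The problem is the range $\ell(f)\geq 6$. Your crude bound $\tfrac23\lfloor\ell/2\rfloor+\lfloor\ell/3\rfloor\leq\ell-4$ only kicks in for large $\ell$ (it already fails at $\ell=6,\dots,10$: e.g.\ for $\ell=6$ it gives $4>2$), and for exactly those lengths you offer only a plan -- ``enumerate the admissible patterns of low-degree vertices and check directly'' -- rather than an argument. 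That is where the real content of the lemma lies (the case $\ell=6$ is tight: three pairwise non-adjacent $3$-vertices force the face to send exactly $2=\ell-4$), so as written the proof has a genuine gap. Moreover, the ingredients you propose for closing it are misdirected: \rxconf{c:2consweak3neighbours} and \rxconf{c:2neighbour33bigface} constrain the neighbourhood of a single high-degree vertex and are used in the paper only for the \emph{vertex}-charge lemma; they give no useful restriction on how $2$- and $3$-vertices sit along a long facial walk, and they are not needed here.

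The missing idea is an amortization that handles all $\ell\geq 6$ uniformly using only the exclusion of \rxconf{c:edge}: let $f$ give $\tfrac13$ to every edge of its boundary, and let each boundary edge $uv$ forward its $\tfrac13$ towards the endpoint whose other neighbour on the walk is a $2$- or $3$-vertex (splitting $\tfrac16+\tfrac16$ otherwise). Since two $3^-$-vertices cannot be adjacent, no edge ever needs to forward more than $\tfrac13$ in total, and one checks that every segment entitled to $\tfrac23$ (resp.\ $\tfrac12$) under \rxrule{r:bigface} indeed collects that much. Hence $f$ sends at most $\ell(f)/3\leq\ell(f)-4$ for $\ell(f)\geq 6$, with no case enumeration. (Your independent-set counts are fine as far as they go, and your $\ell=5$ parenthetical should also mention the configuration with one $2$-vertex, which sends $2\cdot\tfrac12=1$, still within the initial charge; the paper rules out two $2$-vertices on a $5$-face via \rxconf{c:2degree2neighbours}.)
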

\begin{proof}
   Let $f$ be a face of $\mathcal{M}$. Recall that by $\ell(f)$ we denote the length of $f$. We consider following cases.
\begin{itemize}
    \item ${\ell(f)=3}$

    The initial charge of $f$ is $-1$. It receives the charge from vertices on its boundary depending on their degree by \rxrule{r:triangle}. Consider a few subcases depending on the sorted degree sequence of the vertices on $f$ (note that these are the only possible triangles as we excluded \rxconf{c:edge}):
    \begin{itemize}
    \item $(4^-, 7^+, 7^+)$. The $7^+$ vertices send $\tfrac12$ to $f$, so its received charge is $2\cdot \tfrac12 = 1$.
    \item$(5,6, 8^-)$. These triangles are excluded by \rxconf{c:smalltriangles}.
    \item $(5,6,9)$. Then, $f$ receives $\tfrac15 +\tfrac13+\tfrac7{15} = 1$ from its vertices.
    \item $(5,7^+, 7^+)$. Then, $f$ receives $\tfrac15 + \tfrac25+\tfrac25 = 1$ from its vertices.
    \item $(6^+, 6^+, 6^+)$. Then, $f$ receives at least  $3\cdot \tfrac13 = 1$ from its vertices.
    \end{itemize}
     In each case $f$ receives at least $1$, so its final charge is non-negative.
    \item ${\ell(f)=4}$

    The initial and the final charge of $f$ is $0$, as it does not send or receive any charge.
    \item ${\ell(f)=5}$

    The initial charge of $f$ is $1$. Note that since \rxconf{c:edge}, \rxconf{c:2degree2neighbours} and $1$-vertices are excluded, $f$~has either two vertices of degree $3$ or at most one of degree $2$ on its boundary. In these situations $f$ sends $\tfrac23$ or at most $2\cdot \tfrac12$, respectively, by \rxrule{r:bigface}. In both cases its final charge remains non-negative.

    \begin{figure}[ht]
    \centering
    \subfloat[][]{
    \centering
    \begin{tikzpicture}[scale=0.95]
    \tikzstyle{whitenode}=[draw,circle,fill=white,minimum size=8pt,inner sep=0pt]
    \tikzstyle{blacknode}=[draw,circle,fill=black,minimum size=6pt,inner sep=0pt]
    \tikzstyle{tnode}=[draw,ellipse,fill=white,minimum size=8pt,inner sep=0pt]
    \tikzstyle{texte} =[fill=white, text=black]
         \draw (2,-3) node[whitenode] (v) [] {}
            -- ++(72:1cm) node[whitenode] (u) [] {\tiny{3}}
            -- ++(144:1cm) node[whitenode] (x1) [] {}
            -- ++(216:1cm) node[whitenode] (x2) [] {}
            -- ++(288:1cm) node[whitenode] (x3) [] {\tiny{3}};
        \draw (x3) edge []  node [label=left:] {} (v);
    \end{tikzpicture}
    %\caption{\textbf{($C_2$)}}
    \label{fig:c2casea}
    }
    \subfloat[][]{
    \centering
    \begin{tikzpicture}[scale=0.95]
    \tikzstyle{whitenode}=[draw,circle,fill=white,minimum size=8pt,inner sep=0pt]
    \tikzstyle{blacknode}=[draw,circle,fill=black,minimum size=6pt,inner sep=0pt]
    \tikzstyle{tnode}=[draw,ellipse,fill=white,minimum size=8pt,inner sep=0pt]
    \tikzstyle{texte} =[fill=white, text=black]
         \draw (2,-3) node[whitenode] (v) [] {}
            -- ++(72:1cm) node[whitenode] (u) [] {\tiny{$2$}}
            -- ++(144:1cm) node[whitenode] (x1) [] {}
            -- ++(216:1cm) node[whitenode] (x2) [] {}
            -- ++(288:1cm) node[whitenode] (x3) [] {};
        \draw (x3) edge []  node [label=left:] {} (v);
    \end{tikzpicture}
    %\caption{\textbf{($C_2$)}}
    \label{fig:c2caseb}
    }
    \caption{\label{fig:5face} $5$-faces sending any charge to vertices}
    \end{figure}
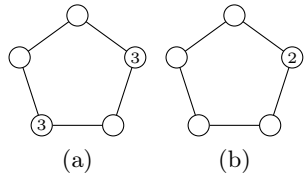
    \item ${\ell(f)\geq 6}$

    We will show the following claim.

  \begin{claim}\label{l:face6}
  	$f$ sends at most $\ell(f)/3$ of charge.
  \end{claim}
  \begin{proof}
  	\renewcommand\qedsymbol{$\lrcorner$}
    Let us implement \rxrule{r:bigface} in the following way.
	\begin{quote} \it
		Face $f$ sends $\tfrac13$ to every edge on its boundary.
	Next, for every edge $e = uv$ on the boundary of $f$ we do the following.
	Let $xuvy$ be a fragment of the facial walk of $f$.
	If $v$ is a $2$- or $3$-vertex, $e$ sends $\tfrac13$ to $u$ through $(x,u,v)$.
	Similarly, if $u$ is a $2$- or $3$-vertex, $e$ sends $\tfrac13$ to $v$ through $(u,v,y)$.
	Otherwise, $e$ sends $\tfrac16$ both to $u$ and $v$ through $(x,u,v)$ and $(u,v,y)$, respectively.
	\end{quote}
	Note that every edge sends at most $\tfrac13$ of charge, as $3^-$-vertices cannot be connected by an edge, because \rxconf{c:edge} is excluded.
	Hence, it suffices to show that for every segment $s=(u,v,w)$, if $v$ fulfills the conditions from \rxrule{r:bigface}, then $v$ gets appropriate charge from $s$.
	Indeed, if $d(u)=d(w)=3$, then $v$ gets $\tfrac13$ through $s$ from both $uv$ and $vw$, so it gets $\tfrac23$ as required.
	Otherwise, $d(u)=2$ or $d(w)=2$, by symmetry assume the latter.
	Then, $v$ gets $\tfrac13$ through $s$ from $vw$ and at least $\tfrac16$ from $vu$ through $s$, so $v$ gets at least $\tfrac12$ through $s$, as required.
  \end{proof}

   Since the initial charge of $f$ is $\ell(f)-2$ and $\ell(f)-2-\ell(f)/3\ge 0$ whenever   ${\ell(f)\geq 6}$, this concludes the proof of Lemma~\ref{l:facescharge}.
\end{itemize}
\end{proof}

\begin{lemma}\label{l:vertexcharge}
  Let $G$ be a graph belonging to the family $\G$. Then, after applying the discharging rules to $G$, the final charge of every vertex $v$ is non-negative.
\end{lemma}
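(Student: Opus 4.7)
The plan is a case analysis on $d(v) \in \{2, 3, \ldots, 9\}$, where the range is forced by Corollary~\ref{lem:no1degree} and the hypothesis $\Delta(G) \leq 9$. The initial charge of $v$ is $d(v) - 4$, and the task in each case is to account for all charge sent out and received by $v$ under the four rules.

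The cases $d(v) \leq 6$ follow easily from Lemma~\ref{lem:c1}. If $d(v) \in \{2, 3, 4\}$, every neighbour of $v$ has degree at least $11 - d(v) \geq 7$, so $v$ neither sends by Rules~\ref{r:degree2} or~\ref{r:degree3}, nor receives by Rule~\ref{r:bigface} (no flanking $3^-$-vertex is ever a neighbour of $v$), nor sends by Rule~\ref{r:triangle} (the row $a \leq 4$ of Table~\ref{table:triangle} is $0$). For $d(v) = 2$ and $d(v) = 3$, the incoming Rule~\ref{r:degree2} or Rule~\ref{r:degree3} charge from the neighbours (of degree $9$ or $8^+$ respectively) exactly balances the deficit; for $d(v) = 4$ the charge is $0$ and remains~$0$. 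If $d(v) \in \{5, 6\}$, Rules~\ref{r:degree2},~\ref{r:degree3}, and~\ref{r:bigface} still do not apply at $v$, and from Table~\ref{table:triangle} we have $m(\cdot, 5, \cdot) \leq 1/5$ and $m(\cdot, 6, \cdot) \leq 1/3$; since $v$ is incident to at most $d(v)$ triangles, the outflow is bounded by $d(v)\cdot\max m = d(v) - 4$.

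The substantive case is $d(v) \in \{7, 8, 9\}$. My plan is to carry out a local, segment-by-segment accounting around $v$: for each of the $d(v)$ segments $s = (x, v, y)$ at $v$ belonging to some incident face $f$, I would bound the outgoing charge attributable to $s$ (the Rule~\ref{r:triangle} payment into $f$ if $f$ is triangular, plus half of the Rule~\ref{r:degree2}/\ref{r:degree3} charges sent to $x$ and $y$, each small neighbour being shared by its two adjacent segments), and subtract the Rule~\ref{r:bigface} charge coming back through $s$ if $f$ is big. The goal is that, summed over the $d(v)$ segments, the net outflow is at most $d(v) - 4$. Here the forbidden configurations, eliminated by Lemmas~\ref{lem:c3} and~\ref{lem:config}, are essential: \rxconf{c:2degree2neighbours} limits $v$ to at most one 2-neighbour; \rxconf{c:2consweak3neighbours} and \rxconf{c:2weak3neighbours3neighbour} limit weak 3-neighbours; \rxconf{c:smalltriangles} forbids triangles in which $v$ is surrounded by two small neighbours and thus bounds how many $m = 1/2$ triangles can cluster around $v$; \rxconf{c:kite} rules out certain clusters of triangular segments with matching small flanks; and \rxconf{c:weak3degree2}, \rxconf{c:8weak3degree4}, \rxconf{c:23triangles-3}, \rxconf{c:2neighbour33bigface} ensure that whenever $v$ sends charge to a 2- or 3-neighbour along a big face, the adjacent segment provides enough compensation via Rule~\ref{r:bigface}.

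The main obstacle will be the sub-case $d(v) = 9$, where one must simultaneously juggle one potential 2-neighbour, several 3-neighbours, adjacent $1/2$-triangles, and big-face compensation; in particular, Configuration~\rxconf{c:2neighbour33bigface} is precisely engineered to eliminate the last remaining corner where these ingredients combine badly and no Rule~\ref{r:bigface} compensation is available. Once all configurations are accounted for, each sub-case reduces to verifying a single arithmetic inequality of the form (sum of $m$-values on triangles at $v$) $+$ (Rule~\ref{r:degree2} and~\ref{r:degree3} charges to small neighbours) $-$ (Rule~\ref{r:bigface} compensation) $\leq d(v) - 4$, which becomes routine but tedious bookkeeping after the structural exclusions are applied.
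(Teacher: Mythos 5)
There is a genuine gap: for $d(v)\le 6$ your accounting is correct and matches the paper, but for $d(v)\in\{7,8,9\}$ you have only stated a plan, and the assertion that after citing the forbidden configurations everything "reduces to a single arithmetic inequality" of routine bookkeeping is exactly where the real content of the lemma lies. Even the easy $d(v)=7$ case needs a specific structural step you did not carry out: by \rxconf{c:kite} at most two triangular segments at $v$ contain a $4$-vertex, and by \rxconf{c:smalltriangles} the remaining triangles receive at most $\tfrac25$, giving $2\cdot\tfrac12+5\cdot\tfrac25=3$. For $d(v)=8$ and $d(v)=9$ the paper needs several nontrivial intermediate claims that your sketch does not supply: the inequality $|\S|\ge\lceil n_{3_{1,0}}/2\rceil$ relating non-weak $3$-neighbours to non-triangular segments; a separate counting claim (Claim~\ref{l:faces3degree}, itself proved by an auxiliary discharging inside the neighbourhood) giving the factor $\tfrac32$ between $3$-neighbours and segments in $\S_3^4\cup\S_{3,4^+}^{5+}$; the existence of one "cheap" segment when $v$ has two weak $3$-neighbours (Claim~\ref{l:bigfaceorcheaptriangle}); and, when the unique $2$-neighbour lies on no triangle, a modification argument that deletes that $2$-vertex and applies the counting claim to the resulting subgraph (Case 2.2). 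None of these are routine consequences of the configuration list; they are where \rxconf{c:2consweak3neighbours}, \rxconf{c:23triangles-3} and \rxconf{c:2neighbour33bigface} actually get used.

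Moreover, the specific local scheme you propose --- charging each segment $(x,v,y)$ with the triangle payment plus half of the Rule~\ref{r:degree2}/\ref{r:degree3} charge to each flanking small neighbour, and aiming for a per-vertex total of $d(v)-4$ --- cannot work as a purely segment-by-segment bound at $d(v)=9$. A triangular segment flanking a weak $3$-neighbour costs $\tfrac12+\tfrac16=\tfrac23$ under your split, which already exceeds the average budget $\tfrac59$, and the extremal cases in the paper (e.g.\ two weak $3$-neighbours, where the total sent is exactly $2\cdot\tfrac13+\tfrac13+8\cdot\tfrac12=5$) are tight, so the argument must be global: one has to exhibit a compensating cheap segment or incoming Rule~\ref{r:bigface} charge elsewhere around $v$, which is precisely what the paper's claims establish and what is missing from your proposal.
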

\begin{proof}
   Let $v$ be a vertex of $G$. Let us state simple observations which follow from the discharging rules, and will be used frequently in the remainder.
   \begin{obs}
     Vertex $v$ sends nothing to incident non-triangular segments.
   \end{obs}
   \begin{obs}
     Vertex $v$ sends at most $\tfrac12$ to an incident triangular segment.
   \end{obs}
   \begin{obs}
     Vertex $v$ sends at most $\tfrac13$ to an incident triangular segment with no $5^-$ vertices.
   \end{obs}

   In what follows we consider cases depending on the degree of $v$.
    \\\\ Assume ${d(v) \leq 1}$.
       By Corollary \ref{lem:no1degree} there are no vertices of this kind in $G$.
    \\\\ Assume ${d(v) = 2}$.
      The initial charge of $v$ is $-2$. By \rxrule{r:degree2} $v$ receives $1$ from each of the two neighbours, and since it does not send any charge, its final charge is $0$.
    \\\\ Assume ${d(v) = 3}$.
      The initial charge of $v$ is $-1$. By \rxrule{r:degree3} $v$ receives $\tfrac13$ from each of the three neighbours, and since it does not send any charge, its final charge is $0$.
    \\\\ Assume ${d(v) = 4}$.
      The initial charge and the final charge of $v$ are equal to $0$, as it does not send or receive any charge.
    \\\\ Assume ${d(v) = 5}$.
      The initial charge of $v$ is $1$. Vertex $v$ sends charge only to incident triangular segments by \rxrule{r:triangle}. Thus, each incident segment to $v$ receives at most $\tfrac15$ from $v$. Hence, $v$ sends at most $5\cdot \tfrac15 = 1$ and its final charge is non-negative.
    \\\\ Assume ${d(v) = 6}$.
      The initial charge of $v$ is $2$. Vertex $v$ sends charge only to incident triangular segments by \rxrule{r:triangle}. Thus, each segment incident to $v$ receives at most $\tfrac13$ from $v$. Hence, $v$ sends at most $6\cdot \tfrac13 = 2$ and its final charge is non-negative.
    \\\\ Assume ${d(v) = 7}$.
      The initial charge of $v$ is $3$. Vertex $v$ sends charge only to incident triangular segments by \rxrule{r:triangle}.
       We claim that $v$ is incident to at most two triangular segments such that each contains a $4$-vertex. Indeed, if $v$ is incident to a triangular segment containing a $4$-vertex, $v$ cannot have another $4$-neighbour as the configuration \rxconf{c:kite} is excluded. Hence every such segment contains edge $vx$, and there are at most two segments with this property.

      By \rxrule{r:triangle} $v$ sends $\tfrac12$ to triangular segments containing a $4$-vertex and at most $\tfrac25$ to each of the remaining segments (by \rxconf{c:smalltriangles} the triangular segments $(7,6,5)$ are excluded). Thus, $v$ sends at most $2\cdot \tfrac12 + (7-2)\tfrac25  = 3$ to the segments, so its final charge is non-negative.
    \\\\ Assume ${d(v) = 8}$.
        The initial charge of $v$ is $4$. By \rxconf{c:edge} $v$ cannot have $2$-neighbours, so $v$ passes the charge only to  triangular segments and $3$-vertices by \rxrule{r:bigface} and \rxrule{r:triangle}. Let us consider the following cases:
\begin{enumerate}
    \item There is a weak $3$-neighbour $a$.
    \\ By \rxconf{c:kite} and \rxconf{c:8weak3degree4} $v$ has no $4^-$-neighbours apart from $a$. This observation and \rxconf{c:smalltriangles} imply that $v$ sends $\tfrac13$ to $a$, $\tfrac12$ to each of the two incident triangular segments containing $a$ and at most $\tfrac25$ to each of the remaining segments. Thus, $v$ sends at most $$\tfrac13+2\cdot \tfrac12 + \tfrac25(8-2) = 3\tfrac{11}{15} \leq 4.$$

    \item There is a semi-weak $3$-neighbour $a$.
    \\ By \rxconf{c:kite} $v$ has no more $3$-neighbours. Since $a$ is the semi-weak neighbour, $v$ is incident to at most $7$ triangular segments. Thus, $v$ sends at most $$\tfrac13+\tfrac12(8-1) = 3\tfrac56 \leq 4.$$

    \item There are no weak or semi-weak $3$-neighbours.
    \\ Let $k$ be the number of $3$-neighbours of $v$. For each such $3$-neighbour $a$ there are exactly two non-triangular $v$-segments containing $a$. Since every non-triangular $v$-segment contains at most two $3$-neighbours of $v$, there are at least $k$ non-triangular $v$-segments, so $v$ sends at most:
    $$\tfrac13k + \tfrac12(8-k) = 4-\tfrac16k \leq 4.$$

\end{enumerate}
In each case $v$ sends at most $4$ to adjacent vertices and segments. Thus, its final charge remains non-negative.

    \\\\ Finally, assume ${d(v) = 9}$.
        The initial charge of $v$ is $5$. We consider several cases:

    \noindent {\bf Case 1} $v$ has no $2$-neighbours.
    \\Let $n_{3_{1,0}}$ be the number of $3$-neighbours of $v$ which are not weak and let $\S$ be the set of non-triangular $v$-segments. Recall that no pair of $3$-neighbours of $v$ can be adjacent as \rxconf{c:edge} is excluded.
    We claim that
    \begin{equation}\label{eq:facesbound}
      |\S| \geq \ceil{\tfrac{n_{3_{1,0}}}{2}}.
    \end{equation}
    Indeed, every non-weak $3$-neighbour belongs to at least one non-triangular $v$-segment and each non-triangular $v$-segment contains at most two such $3$-neighbours.
    \\ Let $n_{3_2}$ be the number of weak $3$-neighbours of $v$. By \rxconf{c:2weak3neighbours3neighbour} we have a bound $n_{3_2} \leq 2$.
    We will consider three subcases depending on $n_{3_2}$.
    \vspace{3mm}

    \noindent {\bf Case 1.1} $n_{3_2} = 0$
    \\ First, assume $n_{3_{1,0}} = 8$. Then, $v$ has at most one $8^+$-neighbour, so since \rxconf{c:edge} is excluded, $v$ is incident to at most two triangular segments. In total $v$ sends at most
    $$8\cdot \tfrac13 + 2\cdot \tfrac12 = 3\tfrac23,$$
    which is smaller than the initial charge.
    Thus, assume $n_{3_{1,0}} \neq 8$. Then $v$ sends at most $\tfrac13n_{3_{1,0}} + \tfrac12(9 - |\S|)$ to adjacent vertices and segments, so its final charge is at least
    $$ 5 - \tfrac13n_{3_{1,0}} - \tfrac12(9 - |\S|) = \tfrac12 + \tfrac12|\S| - \tfrac13n_{3_{1,0}} \stackrel{\eqref{eq:facesbound}}{\geq} \tfrac12 + \tfrac12 \ceil{\tfrac{n_{3_{1,0}}}{2}} - \tfrac13n_{3_{1,0}}.$$
    For odd $n_{3_{1,0}}$ we get $\tfrac12 + \tfrac12 \tfrac{n_{3_{1,0}} + 1}2 - \tfrac13n_{3_{1,0}} = \tfrac34 - \tfrac{n_{3_{1,0}}}{12}$, which is non-negative when $n_{3_{1,0}} \leq 9$.
    For even $n_{3_{1,0}}$, we get $\tfrac12 + \tfrac{n_{3_{1,0}} }{4} - \tfrac13n_{3_{1,0}} = \tfrac12 - \tfrac{n_{3_{1,0}}}{12}$.
    Since $n_{3_{1,0}} \neq 8$ we are left with even $n_{3_{1,0}} \leq 6$ and then $\tfrac12 - \tfrac{n_{3_{1,0}}}{12} \geq 0$, as required.
    \vspace{3mm}

    \noindent {\bf Case 1.2} $n_{3_2} = 1$
    \\ Note that $n_{3_{1,0}} \leq 6$. Indeed, $v$ has nine neighbours, one of them is a weak $3$-neighbour and by \rxconf{c:edge} the neighbours it shares with $v$ have degree $8^+$.

      \begin{claim}\label{l:four3neighbours}
              If $n_{3_{1,0}} = 4$  the final charge of $v$ is non-negative.
      \end{claim}
      \begin{proof}

        \begin{figure}[ht]
                                  \centering
                                  \subfloat[][]{
                                  \centering
                                  \begin{tikzpicture}[scale=0.95]
                                  \tikzstyle{whitenode}=[draw,circle,fill=white,minimum size=8pt,inner sep=0pt]
                                  \tikzstyle{blacknode}=[draw,circle,fill=black,minimum size=6pt,inner sep=0pt]
                                  \tikzstyle{smalldot}=[draw,circle,fill=black,minimum size=1pt,inner sep=0pt]
                                  \tikzstyle{tnode}=[draw,ellipse,fill=white,minimum size=8pt,inner sep=0pt]
                                  \tikzstyle{texte}=[fill=white, text=black]
                                    \draw (2,-3) node[whitenode] (v) [] {\tiny{$v$}}
                                       -- ++(70:1cm) node[whitenode] (a) [] {}
                                       -- ++(120:0.75cm) node[whitenode] (b) [] {\tiny{$3$}}
                                       -- ++(240:0.75cm) node[whitenode] (c) [] {};
                                    \draw (1.1, -2.8) node[whitenode] (x) [] {\tiny{$3$}}
                                      -- ++ (205:0.5cm) node[whitenode] (y) [] {};

                                    \draw (1.37, -3.6) node[whitenode] (z) [] {\tiny{$3$}}
                                      -- ++ (190:0.5cm) node[whitenode] (z1) [] {};

                                    \draw (0.68, -3.25) node[smalldot] (d1) []{};
                                    \draw (0.7025, -3.36) node[smalldot] (d2) []{};
                                    \draw (0.725, -3.47) node[smalldot] (d3) []{};

                                  \draw (3, -3) node[whitenode] (k) [] {\tiny{$3$}}
                                      -- ++(330:0.5cm) node[whitenode] (l) [] {};

                                  \draw (2.7, -3.8) node[whitenode] (m) [] {\tiny{$3$}}
                                     -- ++(350:0.5cm) node[whitenode] (m1) [] {};

                                  \draw (3.34, -3.475) node[smalldot] (d5) []{};
                                  \draw (3.30, -3.575) node[smalldot] (d4) []{};
                                  \draw (3.26, -3.675) node[smalldot] (d6) []{};

                                   \draw (x) edge []  node [label=left:] {} (v);
                                   \draw (a) edge []  node [label=left:] {} (v);
                                   \draw (m) edge []  node [label=left:] {} (v);
                                   \draw (k) edge []  node [label=left:] {} (v);
                                   \draw (z) edge []  node [label=left:] {} (v);
                                   \draw (b) edge []  node [label=left:] {} (v);
                                   \draw (c) edge []  node [label=left:] {} (v);

                                   \draw (c) edge []  node [label=left:] {} (x);
                                   \draw (a) edge []  node [label=left:] {} (k);

                                   \draw (1.7, -4) node[whitenode] (s) [] {};
                                   \draw (2.3, -4) node[whitenode] (t) [] {};

                                   \draw (s) edge []  node [label=left:] {} (v);
                                   \draw (s) edge []  node [label=left:] {} (z);
                                   \draw (t) edge []  node [label=left:] {} (v);
                                   \draw (t) edge []  node [label=left:] {} (m);
                                   \draw (t) edge []  node [label=left:] {} (s);

                                  \end{tikzpicture}
                                  %\caption{\textbf{($C_2$)}}
                                  \label{fig:deg9claim1case1}
                                  }
                                  \subfloat[][]{
                                  \centering
                                  \begin{tikzpicture}[scale=0.95]
                                  \tikzstyle{whitenode}=[draw,circle,fill=white,minimum size=8pt,inner sep=0pt]
                                  \tikzstyle{blacknode}=[draw,circle,fill=black,minimum size=6pt,inner sep=0pt]
                                  \tikzstyle{tnode}=[draw,ellipse,fill=white,minimum size=8pt,inner sep=0pt]
                                  \tikzstyle{smalldot}=[draw,circle,fill=black,minimum size=1pt,inner sep=0pt]
                                  \tikzstyle{texte}=[fill=white, text=black]
                                  \draw (2,-3) node[whitenode] (v) [] {\tiny{$v$}}
                                     -- ++(70:1cm) node[whitenode] (a) [] {}
                                     -- ++(120:0.75cm) node[whitenode] (b) [] {\tiny{$3$}}
                                     -- ++(240:0.75cm) node[whitenode] (c) [] {};

                                 \draw (1.1, -2.8) node[whitenode] (x) [] {\tiny{$3$}}
                                       -- ++ (205:0.5cm) node[whitenode] (y) [] {};

                                 \draw (1.37, -3.6) node[whitenode] (z) [] {\tiny{$3$}}
                                       -- ++ (190:0.5cm) node[whitenode] (z1) [] {};

                                 \draw (0.68, -3.25) node[smalldot] (d1) []{};
                                 \draw (0.7025, -3.36) node[smalldot] (d2) []{};
                                 \draw (0.725, -3.47) node[smalldot] (d3) []{};

                                 \draw (3, -3) node[whitenode] (k) [] {\tiny{$3$}}
                                     -- ++(330:0.5cm) node[whitenode] (l) [] {};

                                 \draw (2.7, -3.8) node[whitenode] (m) [] {\tiny{$3$}}
                                    -- ++(350:0.5cm) node[whitenode] (m1) [] {};

                                 \draw (3.34, -3.475) node[smalldot] (d5) []{};
                                 \draw (3.30, -3.575) node[smalldot] (d4) []{};
                                 \draw (3.26, -3.675) node[smalldot] (d6) []{};

                                 \draw (x) edge []  node [label=left:] {} (v);
                                 \draw (a) edge []  node [label=left:] {} (v);
                                 \draw (m) edge []  node [label=left:] {} (v);
                                 \draw (k) edge []  node [label=left:] {} (v);
                                 \draw (z) edge []  node [label=left:] {} (v);
                                 \draw (b) edge []  node [label=left:] {} (v);
                                 \draw (c) edge []  node [label=left:] {} (v);

                                 \draw (a) edge []  node [label=left:] {} (k);

                                 \draw (2, -4) node[whitenode] (s) [] {};
                                 \draw (s) edge []  node [label=left:] {} (v);
                                 \draw (s) edge []  node [label=left:] {} (z);
                                 \draw (s) edge []  node [label=left:] {} (m);

                                 \draw (1.12, -2.33) node[whitenode] (t) [] {};
                                 \draw (t) edge []  node [label=left:] {} (c);
                                 \draw (t) edge []  node [label=left:] {} (x);
                                 \draw (t) edge []  node [label=left:] {} (v);
                                  \end{tikzpicture}
                                  %\caption{\textbf{($C_2$)}}
                                  \label{fig:deg9claim1case2}
                                  }
                                  \caption{\label{fig:deg9claim1} Possible configurations of neighbours of $v$ \\for $|\S| = 2$, $n_{3_2} = 1$ and $n_{3_{1,0}} = 4$}
        \end{figure}
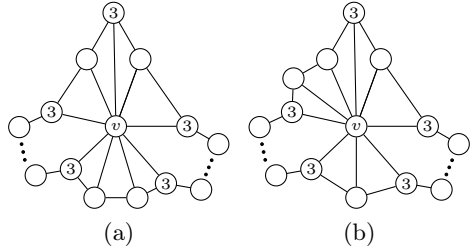

        Note that $v$ has exactly five $3$-neighbours. Recall that $|\S| \geq \ceil{\tfrac{n_{3_{1,0}}}{2}}$. Hence, $|\S|\geq 2$. Suppose $|\S| =2$. Since $n_{3_{1,0}} = 4$ and each of these four $3$-neighbours belongs to at least one non-triangular $v$-segment, each of the two non-triangular segments contains two semi-weak $3$-neighbours of $v$. Hence, we consider two cases --- see Figures \ref{fig:deg9claim1case1} and
        \ref{fig:deg9claim1case2}, as they are the only possible configurations. In both cases, by \rxconf{c:edge}, $v$ belongs to a segment with the degree sequence $(8^+,9,8^+)$, which gets only $\tfrac13$ of charge from $v$.

        Then $v$ sends at most
        $$6\cdot\tfrac12 + \tfrac13 + 5\cdot\tfrac13 = 5.$$

        Finally, suppose $|\S|\geq3$. Then, $v$ sends at most $$\tfrac12(9 - |\S|) + 5\cdot \tfrac13 = 6\tfrac16 - \tfrac12|\S| \leq 6\tfrac16 - \tfrac12\cdot 3 = 4\tfrac23 \leq 5.$$
      \end{proof}

      \begin{claim}\label{l:six3neighbours}
          If $n_{3_{1,0}}  = 6$ the final charge of $v$ is non-negative.
      \end{claim}
      \begin{proof}
        For $n_{3_{1,0}} = 6$ the neighbourhood of $v$ consists of a weak $3$-neighbour, its two $8^+$-neighbours and six $3$-neighbours which are not weak. In particular, the six non-weak $3$-neighbours are consecutive around $v$. By \rxconf{c:edge}, it follows that for each pair $x$, $y$ of these six neighbours the segment $(x, v, y)$ is non-triangular. Hence, $|\S| \geq 5$.

        Then, $v$ sends at most
        $$ 7\cdot \tfrac13 + (9 - |S|)\cdot \tfrac 12 = 4\tfrac13 < 5.$$

      \end{proof}
      Now we will show that the final charge of $v$ is non-negative also in the remaining cases.
      Vertex $v$ sends at most  $\tfrac13 + \tfrac13n_{3_{1,0}} + \tfrac12(9 - |\S|)$ to adjacent vertices and segments, so its final charge is at least
      $$5 - \tfrac13 - \tfrac13n_{3_{1,0}} - \tfrac12(9 - |\S|)= \tfrac16 + \tfrac12|\S| -  \tfrac13n_{3_{1,0}} \stackrel{\eqref{eq:facesbound}}{\geq} \tfrac16 +\tfrac12\ceil{\tfrac{n_{3_{1,0}}}2} - \tfrac13n_{3_{1,0}}.$$

      For odd $n_{3_{1,0}}$, this value is equal to $\tfrac16 + \tfrac12\tfrac{n_{3_{1,0}} + 1}{2} - \tfrac13n_{3_{1,0}} = \tfrac5{12} - \tfrac{n_{3_{1,0}}}{12}$
      which is non-negative for $n_{3_{1,0}}\leq 5$.
      For even $n_{3_{1,0}}$ it is equal $\tfrac16 + \tfrac12\tfrac{n_{3_{1,0}}}{2} - \tfrac13n_{3_{1,0}} = \tfrac16 - \tfrac{n_{3_{1,0}}}{12}$, which is non-negative for $n_{3_{1,0}}\leq 2$.
      Since $n_{3_{1,0}} \leq 6$, these are all the cases not covered by Claims \ref{l:four3neighbours} and
      \ref{l:six3neighbours}.
      \vspace{3mm}

      \noindent {\bf Case 1.3} $n_{3_2} = 2$
      \\Let $a$, $b$ be the weak $3$-neighbours of $v$.
      \begin{figure}[!ht]
                \centering
                \subfloat[][]{

                    \begin{tikzpicture}[scale=0.95]
                    \tikzstyle{whitenode}=[draw,circle,fill=white,minimum size=8pt,inner sep=0pt]
                    \tikzstyle{blacknode}=[draw,circle,fill=black,minimum size=6pt,inner sep=0pt]
                    \tikzstyle{whitenode}=[draw,circle,fill=white,minimum size=8pt,inner sep=0pt]
                    \tikzstyle{blacknode}=[draw,circle,fill=black,minimum size=6pt,inner sep=0pt]
                    \tikzstyle{tnode}=[draw,ellipse,fill=white,minimum size=8pt,inner sep=0pt]
                    \tikzstyle{texte} =[fill=white, text=black]
                    \draw (2,-3) node[whitenode] (v) [label=left:$v$] {}
                        -- ++(60:1.04cm) node[whitenode] (x1) [] {}
                        -- ++(150:0.6cm) node[whitenode] (v1) [] {\tiny{$3$}}
                        -- ++(210:0.6cm) node[whitenode] (x2) [] {}
                        -- ++(300:1.04cm) node[whitenode] (u2) [] {}
                        -- ++(240:1.04cm) node[whitenode] (x3) [] {}
                        -- ++(330:0.6cm) node[whitenode] (v2) [] {\tiny{$3$}}
                        -- ++(30:0.6cm) node[whitenode] (x4) [] {};

                    \draw (3,-3) node[whitenode] (v3) [] {};
                    \draw (1.13,-3.5) node[whitenode] (v4) [] {};
                    \draw (1.13,-2.5) node[whitenode] (v5) [] {};

                    \draw (v1) edge []  node [] {} (v);
                    \draw (v2) edge []  node [] {} (v);
                    \draw [dashed] (v3) edge []  node [] {} (v);
                    \draw [dashed] (v4) edge []  node [] {} (v);
                    \draw [dashed] (v5) edge []  node [] {} (v);
                    \draw (x4) edge []  node [] {} (v);
                    \draw (x4) edge []  node [] {} (v2);
                    \draw (x1) edge []  node [] {} (v1);
                    \draw (x2) edge []  node [] {} (v1);

                    \end{tikzpicture}
                    \label{fig:two3neighboursCasea}
                    }
                    \subfloat[][]{
                    \begin{tikzpicture}[scale=0.95]
                    \tikzstyle{whitenode}=[draw,circle,fill=white,minimum size=8pt,inner sep=0pt]
                    \tikzstyle{blacknode}=[draw,circle,fill=black,minimum size=6pt,inner sep=0pt]
                    \tikzstyle{whitenode}=[draw,circle,fill=white,minimum size=8pt,inner sep=0pt]
                    \tikzstyle{blacknode}=[draw,circle,fill=black,minimum size=6pt,inner sep=0pt]
                    \tikzstyle{tnode}=[draw,ellipse,fill=white,minimum size=8pt,inner sep=0pt]
                    \tikzstyle{texte} =[fill=white, text=black]
                    \draw (2,-3) node[whitenode] (v) [label=right:$v$] {}
                        -- ++(60:1.04cm) node[whitenode] (x1) [] {}
                        -- ++(150:0.6cm) node[whitenode] (v1) [] {\tiny{$3$}}
                        -- ++(210:0.6cm) node[whitenode] (x2) [] {}
                        -- ++(300:1.04cm) node[whitenode] (u2) [] {}
                        -- ++(240:1.04cm) node[whitenode] (x3) [] {}
                        -- ++(330:0.6cm) node[whitenode] (v2) [] {\tiny{$3$}}
                        -- ++(30:0.6cm) node[whitenode] (x4) [] {};

                    \draw (1,-3) node[whitenode] (v3) [] {};
                    \draw (1.13,-3.5) node[whitenode] (v4) [] {};
                    \draw (1.13,-2.5) node[whitenode] (v5) [] {};

                    \draw (v1) edge []  node [] {} (v);
                    \draw (v2) edge []  node [] {} (v);
                    \draw [dashed] (v3) edge []  node [] {} (v);
                    \draw [dashed] (v4) edge []  node [] {} (v);
                    \draw [dashed] (v5) edge []  node [] {} (v);
                    \draw (x4) edge []  node [] {} (v);
                    \draw (x4) edge []  node [] {} (v2);
                    \draw (x1) edge []  node [] {} (v1);
                    \draw (x2) edge []  node [] {} (v1);

                    \end{tikzpicture}
                    \label{fig:two3neighboursCaseb}
                    }
                    \caption{\label{fig:9vertexno2neighbourtwo3neigbours} Possible configurations of vertices incident to $v$}
      \end{figure}
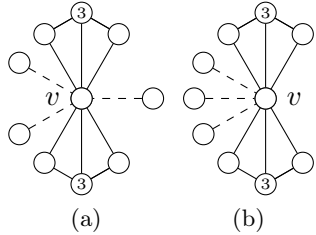

      \begin{claim}\label{l:bigfaceorcheaptriangle}
      In this case $v$ is incident to at least one triangular segment not containing a $5^-$ vertex or to at least one non-triangular segment.
      \end{claim}

      \begin{proof}
        By \rxconf{c:2consweak3neighbours} we know that the two weak $3$-neighbours $a$ and $b$ do not share a neighbour other than $v$, so there are exactly three neighbours of $v$ which are neither $a$ nor $b$ nor the neighbours of $a,\ b$. Let $C$ be the set of such vertices. There are two ways of how the vertices of $C$ may be arranged around $v$: either exactly two of them are consecutive (see Fig \ref{fig:two3neighboursCasea}) or
        all of them are consecutive (see Fig \ref{fig:two3neighboursCaseb}).

        Suppose that all segments incident to $v$ are triangular, for otherwise we are done. In the case $(a)$, the two consecutive vertices of $C$ are neighbours, so by \rxconf{c:edge} one of them, call it $x$, has degree at least $6$. Taking $x$, $v$ and the common neighbour of $v$, $x$, and $a$ or $b$ we get the triangular segment from the claim.

        In the case $(b)$, the sought triangular segment is formed by $v$ and neighbours of $a$ and~$b$.
      \end{proof}

      According to \rxrule{r:triangle}, $v$ sends $\tfrac13$ to every triangular segment containing no vertices of degree $5^-$, if any. Moreover, $v$ sends no charge to non-triangular segments. Hence, by Claim \ref{l:bigfaceorcheaptriangle} $v$ sends at most $\tfrac13 + 8 \cdot \tfrac12$ to triangular segments. Also, $v$ sends $2\cdot \tfrac13$ to $a$ and $b$, and nothing more to the other neighbours by \rxconf{c:edge} and
      \rxconf{c:2weak3neighbours3neighbour}. Thus, the total charge sent by $v$ is at most
      $$2\cdot\tfrac13 + \tfrac13 + 8 \cdot \tfrac12 = 5.$$

\noindent {\bf Case 2} $v$ has a $2$-neighbour.
\\By \rxconf{c:2degree2neighbours} $v$ has exactly one $2$-neighbour, let us denote it by $a$. By \rxconf{c:weak3degree2} $v$ does not have weak $3$-neighbours. In the next paragraphs we study the dependence between the number of $3$-neighbours of $v$ and that of non-triangular $v$-segments.

  We introduce the following notation. Let $\mathcal{N}_3$ be the set of $3$-neighbours of $v$. For a segment $s$ let $n_3(s)$ be the number of $3$-vertices in $s$. Let $\S$ be the set of all non-triangular $v$-segments and let $\S^i$ be the set of segments of length $i$ incident with $v$ (recall that the length of a segment is defined as the length of face it belongs to). Moreover, we define
  \begin{itemize}
    \item $\S_d = \{s\in\S\ |\ s \mbox{ contains a } d \mbox{-neighbour of } v\}$,
    \item $\S_{d, d'} = \{(x,v,y)\in\S\ |\ \{d_G(x), d_G(y)\} = \{d,d'\}\}$
  \end{itemize}
  For example, $\S_{3}$ is a set of segments containing a $3$-neighbour of $v$ (note that it may contain a $2$-neighbour or another $3$-neighbour).
  We extend this notation in a natural way to degree lower bounds, for example, $\S_{2, 4^+}$ is a set of segments containing exactly one $2$-neighbour of $v$ and a $4^+$-neighbour of $v$.

  We also define $\S_l^d = \S^d \cap \S_l$, e.g., $\S_{3,4+}^{5+} = \S^{5+}\cap \S_{3,4+}$. Moreover, let $$\mathcal{N}_3^* = \{x\in \mathcal{N}_3\ |\ \mbox{if } x \text{ belongs to } s\in\S, \mbox{ then } s\in\S_3^4 \cup \S_{3,4+}^{5+} \}.$$
  We also use the notation defined above with respect to any subgraph $G'$ of $G$ and a vertex $v'\in V(G')$, for example $\S(G',v')$ is the set of all non-triangular $v'$-segments, $\S_d(G', v') = \{ s\in \S(G', v')\ |\ s \text{ contains a } d \text{-neighbour of } v'\}$ etc.

  \begin{claim}\label{l:faces3degree}
    Let $G'$ be a subgraph of $G$, and let $v'$ be a vertex of $G'$ such that $v'$ has no weak $3$-neighbours and it is not incident with a $4$-face with two semi-weak $3$-neighbours on its boundary.
    Then $$|\mathcal{N}_3^*(G',v')|\leq \tfrac32 | \S_3^4(G',v') \cup \S_{3,4+}^{5+}(G',v')|.$$
  \end{claim}
  \begin{proof}
    We use a discharging argument. Each segment $s\in\S_3^4(G',v') \cup \S_{3,4+}^{5+}(G',v')$ sends the charge using the following rules:
    \vspace{3mm}

    {\it Consider a segment $s=(x,v',y)\in\S_3^4(G',v') \cup \S_{3,4+}^{5+}(G',v')$.
    First assume $s$ contains a semi-weak $3$-neighbour, say $x$.
    By our assumption $y$ is not semi-weak $3$-neighbour.
    Then $s$ sends $1$ to $x$ and $\tfrac12$ to $y$.
    Otherwise, suppose that in our plane embedding $x$ appears just before $y$ among the neighbours of $v$ in the clockwise order  around $v$.
    Then, $s$ sends $1$ to $y$ and $\tfrac12$ to $x$.}

    % \\{\it If $f$ has on its boundary a semi-weak $3$-neighbour $x$, $f$ sends $1$ to $x$ and $\tfrac12$ to the other neighbour of $v$ on the boundary. Otherwise, let $y$ and $z$ be the two neighbours of $v$ incident with $f$, in the clockwise order around. Then, $f$ sends $1$ to $y$ and $\tfrac12$ $z$.}
    \vspace{3mm}

    Observe that to show the claim it suffices to prove that ($i$) every vertex $x\in\mathcal{N}_3^*(G',v')$ gets at least $1$ charge and ($ii$) every segment of $\S_3^4(G',v') \cup \S_{3,4+}^{5+}(G',v')$ sends at most $\tfrac32$ charge.

    Consider $x\in \mathcal{N}_3^*(G',v')$. If $x$ is incident to two segments from $\S(G',v')$, then $x$ gets at least $\tfrac12$ from each of them, so at least $1$ charge in total. Otherwise, $x$ is incident to exactly one segment from $\S(G',v')$, because weak $3$-neighbours are excluded.
    %w podgrafie nie moze byc kongiuracji bo wtedy byłaby w G
    Hence, $x$ is a semi-weak $3$-neighbour and it gets $1$ charge from the incident segment, so ($i$) holds. For ($ii$) it suffices to show that each segment from $\S_3^4(G',v')\cup \S_{3, 4^+}^{5+}(G',v')$  has on its boundary at most one semi-weak
    $3$-neighbour of $v$. Indeed, a segment from $\S_{3, 4^+}^{5+}(G',v')$ has exactly one by definition and a segment of length $4$ cannot be incident to two semi-weak $3$-neighbours by our assumption.
  \end{proof}

  \begin{claim}\label{l:bigface}
    If $|\S_{2, 4^+}^4 \cup \S_2^{5+}\cup \S_{3,3}^{5+} \cup \S_{4^+, 4^+}| \geq 1$, then $v$ has non-negative final charge.
  \end{claim}

  \begin{proof} Note that
    $|\S|= |\S_{2,4^+}^4| + |\S_2^{5+}| + |\S_{3,3}^{5+}| +
    |\S_3^4\cup \S_{3,4+}^{5+}| + |\S_{4^+, 4^+}|$.
    Consider a $3$-neighbour $x\in \mathcal{N}_3\setminus \mathcal{N}_3^*$. By \rxconf{c:weak3degree2} $x$ is not a weak $3$-neighbour, so it is incident to at least one segment $s\in\S$. Since $x\in \mathcal{N}_3\setminus \mathcal{N}_3^*$, we may choose $s$ so that $s\in \S_3\setminus(\S_3^4\cup \ \S_{3,4+}^{5+}) \subseteq \S_2^{5+}\cup \S_{3,3}^{5+}.$
    There are at most $|\S_2^{5+}|$, resp. $2|\S_{3,3}^{5+}|$, such $x$'s incident to a segment from $\S_2^{5+}$, resp. $\S_{3,3}^{5+}$. Hence, $|\mathcal{N}_3 \setminus \mathcal{N}_3^*| \leq |\S_2^{5+}| + 2 |\S_{3,3}^{5+} |$.
    By Claim \ref{l:faces3degree} (applied with $G'=G$ and $v'=v$, note that the assumptions are met because \rxconf{c:weak3degree2} and \rxconf{c:2neighbour33bigface} are excluded), it follows that $|\mathcal{N}_3| \leq |\S_2^{5+}| + 2|\S_{3,3}^{5+}| + \tfrac32|\S_3^4\cup \S_{3,4+}^{5+}|$.

    Thus, $v$ sends at most
  \begin{equation}
    \begin{aligned}
          &  1 + \tfrac13|\mathcal{N}_3| + \tfrac12(9-|\S|) = 5\tfrac12 + \tfrac13|\mathcal{N}_3| -\tfrac12|\S| \leq \\
          & \leq 5\tfrac12 + \tfrac13(|\S_2^{5+}| + 2|\S_{3,3}^{5+}| + \tfrac32|\S_3^4\cup \S_{3,4+}^{5+}|) - \tfrac12(|\S_{2,4^+}^4| + |\S_2^{5+}| + |\S_{3,3}^{5+}| +
          |\S_3^4\cup \S_{3,4+}^{5+}| + |\S_{4^+, 4^+}|) = \\
         & = 5\tfrac12 - \tfrac16|\S_2^{5+}| + \tfrac16|\S_{3,3}^{5+}|-\tfrac12|\S_{4^+, 4^+}| - \tfrac12|\S_{2,4^+}^4|.
   \end{aligned}
  \end{equation}

  The initial charge increased by the charge $v$ receives from segments by \rxrule{r:bigface} is equal to $$5 + \tfrac12|\S_2^{5+}| + \tfrac23|\S_{3,3}^{5+}|.$$

  Hence, the final charge at $v$ is at least
  \begin{equation}
    \begin{aligned}
    &\left(5 + \tfrac12|\S_2^{5+}| + \tfrac23|\S_{3,3}^{5+}|\right) - \left( 5\tfrac12 -\tfrac16|\S_2^{5+}| + \tfrac16|\S_{3,3}^{5+}|-\tfrac12|\S_{4^+, 4^+}| - \tfrac12|\S_{2,4^+}^4|  \right) = \\
    &\tfrac23|\S_2^{5+}| + \tfrac12| \S_{3,3}^{5+}| + \tfrac12|\S_{4^+, 4^+}| + \tfrac12|\S_{2,4^+}^4| -\tfrac12
  \end{aligned}
  \end{equation}
  which is non-negative if $|\S_{2, 4^+}^4 \cup \S_2^{5+}\cup \S_{3,3}^{5+} \cup \S_{4^+, 4^+}| \geq 1$.
\end{proof}

   As we excluded $\rxconf{c:weak3degree2}$, $\rxconf{c:2neighbour33bigface}$, by Claim~\ref{l:bigface} we can assume that $\S_2^{5+} = \S_{3,3}^{5+}=\S_{4^+, 4^+} = S_{2,4^+}^4 = \emptyset.$ In other words,
   \begin{equation}\label{eq:facetypes}
     \S = \S_3^4 \cup \S_{3, 4^+}^{5+}
   \end{equation}
   and thus $\mathcal{N}_3 = \mathcal{N}_3^*$.

  We consider two subcases. (recall that $a$ is the unique $2$-neighbour of $v$.)
  \vspace{3mm}

  \noindent  {\bf Case 2.1} $a$ is incident to at least one triangular segment.
\\Since $G$ is simple $a$ is incident to exactly one non-triangular segment (denote it by $s$) and one triangular one. By \eqref{eq:facetypes} $s$ has length $4$ and $s$ contains a $3$-neighbour. The $3$-neighbour in $s$ has to be incident to another segment from $\S$ for otherwise we get the excluded configuration \rxconf{c:weak3degree2}. Thus, $|\S|\geq 2$.

    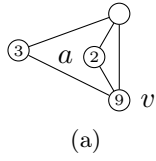
\begin{figure}[!ht]
                    \centering
                    \subfloat[][]{
                    \centering
                    \begin{tikzpicture}[scale=0.95]
                    \tikzstyle{whitenode}=[draw,circle,fill=white,minimum size=8pt,inner sep=0pt]
                    \tikzstyle{blacknode}=[draw,circle,fill=black,minimum size=6pt,inner sep=0pt]
                    \tikzstyle{tnode}=[draw,ellipse,fill=white,minimum size=8pt,inner sep=0pt]
                    \tikzstyle{texte} =[fill=white, text=black]
                         \draw (2,-3) node[whitenode] (v) [label=right:$v$] {\tiny{9}}
                            -- ++(120:0.7cm) node[whitenode] (a) [label=left:$a$] {\tiny{$2$}}
                            -- ++(60:0.7cm) node[whitenode] (x1) [] {}
                            -- ++(200:1.5cm) node[whitenode] (x2) [] {\tiny{$3$}};
                        \draw (x1) edge []  node [label=left:] {} (v);
                        \draw (x2) edge []  node [label=left:] {} (v);
                    \end{tikzpicture}
                    %\caption{\textbf{($C_2$)}}
                    \label{fig:deg91.2}
                }
                \caption{\label{fig:2vertexontriangle} Configuration of $a$ belonging to exactly one triangle}
      \end{figure}

  Note that by \rxconf{c:23triangles-3} $v$ has no more $3$-neighbours. Hence, $v$ sends at most
  $$1 + \tfrac13 + \tfrac12(9-|\S|) \leq 1 + \tfrac13 + \tfrac12(9-2) = 4\tfrac56,$$
  which is smaller than the initial charge. Hence, $v$ has non-negative final charge.
  \vspace{3mm}

  \noindent {\bf Case 2.2}
  $a$ is not incident to any triangular segment.
  \begin{claim}
    $\tfrac32(|\S|-1)\geq n_3$
  \end{claim}
  \begin{proof}
    Consider a subgraph $G'$ of $G$ obtained by removing the vertex $a$. By our assumption $a$ is incident to two non-triangular segments in $G$, which are of length $4$ by \eqref{eq:facetypes}, since by definition $\S_{3,4^+}^{5+}\cap \S_2 = \emptyset$. Then in the process of obtaining $G'$ from $G$ we remove two segments of length $4$, say $s_1,\ s_2$ and create a new segment $s'$, also of length $4$ (see Figure~\ref{fig:9vertexremoving2vertex}).
    By \eqref{eq:facetypes}, $s_1,\ s_2\in \S_3^4(G,v)$
    and hence $s'\in\S_3^4(G',v)$.  In other words
    \begin{equation}\label{eq:podgraphfaces}
     \S_3^4(G', v)\cup \S_{3,4+}^{5+}(G',v) = \S(G,v) \setminus \{s_1,s_2\} \cup \{s'\}.
    \end{equation}

    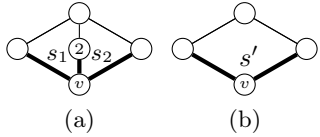
\begin{figure}[ht]
        \centering
        \subfloat[][]{
        \centering
        \begin{tikzpicture}[scale=0.95]
            \tikzstyle{whitenode}=[draw,circle,fill=white,minimum size=8pt,inner sep=0pt]
            \tikzstyle{blacknode}=[draw,circle,fill=black,minimum size=6pt,inner sep=0pt]
            \tikzstyle{smalldot}=[draw,circle,fill=black,minimum size=1pt,inner sep=0pt]
            \tikzstyle{tnode}=[draw,ellipse,fill=white,minimum size=8pt,inner sep=0pt]
            \tikzstyle{texte}=[fill=white, text=black]

            \draw (2,-3) node[whitenode] (v) [] {\tiny{$v$}}
               -- ++(30:1cm) node[whitenode] (x) [] {}
               -- ++(150:1cm) node[whitenode] (y) [] {}
               -- ++(210:1cm) node[whitenode] (z) [] {};

            \draw (2, -2.5) node[whitenode] (a) [] {\tiny{$2$}};

           \draw (x) edge [line width=1.6pt]  node [label=left:] {} (v);
           \draw (y) edge []  node [label=left:] {} (x);
           \draw (z) edge []  node [label=left:] {} (y);
           \draw (v) edge [line width=1.6pt]  node [label=left:] {} (z);

           \draw (a) edge [line width=1.6pt]  node [label=left:] {} (v);
           \draw (y) edge []  node [label=left:] {} (a);

           \node[right=0pt] at (1.4,-2.6) {\footnotesize{$s_1$}};
           \node[right=0pt] at (2.02,-2.6) {\footnotesize{$s_2$}};

          \end{tikzpicture}
          \label{fig:deg9claim7twofaceswithchord}
          }
                              \subfloat[][]{
                              \centering
                              \begin{tikzpicture}[scale=0.95]
                              \tikzstyle{whitenode}=[draw,circle,fill=white,minimum size=8pt,inner sep=0pt]
                              \tikzstyle{blacknode}=[draw,circle,fill=black,minimum size=6pt,inner sep=0pt]
                              \tikzstyle{tnode}=[draw,ellipse,fill=white,minimum size=8pt,inner sep=0pt]
                              \tikzstyle{smalldot}=[draw,circle,fill=black,minimum size=1pt,inner sep=0pt]
                              \tikzstyle{texte}=[fill=white, text=black]

                              \draw (2,-3) node[whitenode] (v) [] {\tiny{$v$}}
                                 -- ++(30:1cm) node[whitenode] (x) [] {}
                                 -- ++(150:1cm) node[whitenode] (y) [] {}
                                 -- ++(210:1cm) node[whitenode] (z) [] {};

                             \draw (x) edge [line width=1.6pt]  node [label=left:] {} (v);
                             \draw (y) edge []  node [label=left:] {} (x);
                             \draw (z) edge []  node [label=left:] {} (y);
                             \draw (v) edge [line width=1.6pt]  node [label=left:] {} (z);

                             \node[right=0pt] at (1.78,-2.6) {\small{$s'$}};

                              \end{tikzpicture}
                              \label{fig:deg9claim7facesconnected}
                              }
                              \caption{\label{fig:9vertexremoving2vertex} Segments in $G$ and in $G'$.}
    \end{figure}

    Then, \eqref{eq:facetypes} and \eqref{eq:podgraphfaces} implies that $\S(G',v) = \S_3^4(G',v)\cup \S_{3,4^+}^{5+}(G',v),$ and hence $\mathcal{N}_3^*(G',v) = \mathcal{N}_3(G',v) = \mathcal{N}_3(G,v)$.

    Since $G$ does not contain configurations \rxconf{c:weak3degree2} and \rxconf{c:2neighbour33bigface}, also in $G'$ vertex $v$ has no weak $3$-neighbours and is not incident to any $4$-face having two semi-weak $3$-neighbours on its boundary (recall that configurations are defined as subgraphs, so if $G'$ contains \rxconf{c:2neighbour33bigface}, so does $G$).
    Therefore, we can apply Claim \ref{l:faces3degree} for $G'$ and $v'=v$.
    Hence, by Claim \ref{l:faces3degree} and equalities \eqref{eq:facetypes} and  \eqref{eq:podgraphfaces} to get the following
    $$|N_3(G,v)| \stackrel{\eqref{eq:facetypes}}{=} |\mathcal{N}_3^*(G',v)| \stackrel{\text{Claim \ref{l:faces3degree}}}{\leq} \tfrac32 |\S_3^4(G', v)\cup \S_{3, 4^+}^{5+}(G', v)| \stackrel{\eqref{eq:podgraphfaces}}{=} \tfrac32 \left( |\S(G,v)| - 1 \right).$$
  \end{proof}
  Vertex $v$ sends $1$ to its $2$-neighbour, $\tfrac13$ to its $3$-neighbours and at most $\tfrac12$ to incident triangular segments. Thus, it sends at most
  $$1 + \tfrac13n_3 + \tfrac12(9-|\S|) \leq  5\tfrac12 +\tfrac13n_3 -\tfrac12(\tfrac23n_3 + 1) = 5$$
  so the final charge of $v$ remains non-negative.
\end{proof}

By lemmas \ref{l:facescharge} and \ref{l:vertexcharge} every face and every vertex has non-negative final charge. Since the total charge does not change when the discharging rules are applied, we obtain that $\sum_{v \in V} (d(v)-4)+\sum_{f \in F}(\ell(f)-4) \ge 0$, thus $4|E|-4|V|-4|F|\ge 0$. However, by Euler's formula $|E|-|V|-|F|=-2$, a contradiction. It follows that the minimal counterexample does not exist, and thus we have proved Theorem~\ref{th:main}.

\refstepcounter{rulecnt}\label{ruleFinal}
\refstepcounter{confcnt}\label{confFinal}

\section{Computer-assisted reducibility}
\label{sec:computer}

In this section we describe the algorithm used for the computer-assisted proof of Lemma~\ref{lem:config}.

The input to the algorithm consists of a graph $H$ that describes a configuration: a graph and a degree function $d:V(H)\rightarrow\mathbb{N}$ that describes degrees of vertices of $H$ in graph $G$, e.g. for \rxconf{c:23triangles-3} $d(x) = 2$, $d(w) = d(z) = 3$ and $d(y) = d(y) = 9$.
Additionally, the input specifies an edge $e_H \in E(H)$, whose meaning will be explained in what follows.

Let us first describe the structure of the proofs generated by our program \texttt{reduce.py}, and next we will elaborate on how we implemented finding such proofs.

\subsection{Proof structure}
\label{sec:structure}

The proof structure is simple. We remove edge $e_H$ from $G$ obtaining a new graph $G'$, colour the graph $G'$ with a colouring $c'$ which exists by the minimality of $G$. Next, we uncolour all the edges of $E(H)$ and the goal is to find a proper colouring $c$ of all edges $G$ (together with $e_H$) such that $c|_{E(G)\setminus E(H)}=c'|_{E(G)\setminus E(H)}$. Note that this proof structure fits the proof of Lemma~\ref{lem:c1} but does not fit the proof of Lemma~\ref{lem:c3}, and in fact configuration~\rxconf{c:2degree2neighbours} cannot be reduced this way.

Of course, even a computer cannot enumerate all colourings of $G'$, since we do not know $G$, and although $H$ is fixed, the number of possible graphs $G$ is infinite. Instead, we enumerate certain {\em classes} of colourings of $G'$.
Such a class is characterized by:
\begin{itemize}
	\item $\Cc=\{C_v \mid v\in V(H)\}$, where $C_v$ is a multiset of colours of the $d_G(v)-d_H(v)$ edges incident with $v$ outside $H$, for every vertex $v\in V(H)$,
	\item a set $\Pp$ that contains a triple $(i, u, v)$ for $i\in\{1,2,3,4\}$ and $u,v\in V(H)$ iff (1) both $u$ and $v$ have exactly one incident edge in $E(G)\setminus E(H)$ colored with $i$ and (2) there is an $i$-colored path in $E(G)\setminus E(H)$ between $u$ and $v$ in the coloring $c'$. 
%	\item a set $\Pp$ of monochromatic paths in $G-H$ with endpoints in $V(H)$.
%	Each path $P\in \Pp$ is represented by its colour and its endpoints.
\end{itemize}
Note that the number of possible sets $\Pp$ is bounded for a fixed configuration $(H,d)$. Hence we are able to enumerate all the classes of colourings of $G'$. Moreover, we claim it is sufficient to know just the class that $c'$ belongs to in order to check whether a colouring of $E(H)$ combined with $c'|_{E(G)\setminus E(H)}$ is a proper colouring of $G$.
Indeed, $\Cc$ is sufficient to make sure that for every vertex of $H$ the number of occurrences of every colour $i$ in the combined colouring is correct (i.e., at most two for $i=1,2,3,4$ and at most one for $i=0$).
Such a colouring could still contain a monochromatic cycle. 
However, if there is such a cycle $C$, it means that there is a colour $i=1,2,3,4$ and a collection $\mathcal{A}$ of $i$-coloured paths in $E(H)$ and another collection $\mathcal{B}$ of $i$-coloured paths in $E(G)\setminus E(H)$ such that $C$ is formed by paths coming alternately from $\mathcal{A}$ and $\mathcal{B}$. Clearly, $\Pp$ is sufficient to guarantee that this does not happen.

Note that the colouring of the inner edges of the configuration, i.e., $c'|_{E(H)\setminus \{e_H\}}$ is not needed here.
However, we do check if there is at least one colouring $c_{\text{inner}}$ of ${E(H)\setminus \{e_H\}}$ that is consistent with $\Cc$ and $\Pp$, because otherwise the pair $(\Cc,\Pp)$ does not correspond to any colour class and we can safely skip it. Here, {\em consistent} means that $c_{\text{inner}}$ can be extended by outer colorings in the class $(\Cc,\Pp)$, i.e., (1) for every $v\in V(H)$, the multiset $C_v \cup \{c_{\text{inner}}(vw) \mid vw \in E(H)\setminus \{e_H\}\}$ has at most two copies of every color and at most one copy of color $0$, (2) for every $(i,u,v)\in \Pp$ we have $i\in C_u\cap C_v$, and (3) after extending the graph $(V(H), {E(H)\setminus \{e_H\}})$ colored with $c_{\text{inner}}$ by colored edges corresponding to paths in $\Pp$ we do not get monochromatic cycles.  

Let us call a reducibility proof with structure as above, a {\em standard proof}.

Recall that in our configurations each vertex has an upper bound on its degree in $G$ (for example 3 for vertex $w$ in $\rxconf{c:23triangles-3}$). For some vertices this upper bound seems not to be specified (for example for vertex $u$ in $\rxconf{c:23triangles-3}$) but then it is just equal to the maximum degree, i.e., 9. These upper bounds are exactly the values of function $d$ that is a part of the input to our program. 
Thus one may think that the program verifies only reducibility for one particular choice of the values of degrees, i.e., the maximal values.
However, this implies reducibility also for any degree function $d':V(G)\rightarrow\mathbb{N}$ such that for each $v\in V(H)$, $d_H(v) \le d'(v) \le d(v)$, as shown in the lemma below.
The intuition behind the proof of this lemma should be clear: a standard reducibility proof for configuration $C$ works also for configuration $C'$.

\begin{lemma}
		Let $C = (H, d)$ and $C'= (H,d')$ be two configurations and assume that for every vertex $v \in V(H)$ we have $d_H(v) \le d'(v) \le  d(v)$. 
		If $C$ is reducible by a standard proof, then $C'$ is reducible.
\end{lemma}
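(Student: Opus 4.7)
The plan is to reduce reducibility of $C'$ to reducibility of $C$. Assume that $G\in\G$ contains $H$ as a subgraph with $d_G(v)=d'(v)$ for every $v\in V(H)$; then by minimality $G'=G-e_H$ admits a proper colouring $c'$, and $c'|_{E(G)\setminus E(H)}$ determines a class $(\Cc',\Pp')$ with $|C_v'|=d'(v)-d_H(v)$. I would construct a companion class $(\Cc,\Pp)$ that is valid for $C$, apply the standard reducibility proof of $C$ to $(\Cc,\Pp)$ to obtain a colouring $c$ of $E(H)$, and then show that this same $c$ combines with $c'|_{E(G)\setminus E(H)}$ into a proper colouring of $G$, contradicting the assumption that $G\in\G$.

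The construction of $(\Cc,\Pp)$ is by adding phantom colours. For each $v\in V(H)$ add exactly $d(v)-d'(v)$ phantom colours to $C_v'$, picking the colours so that the multiset $C_v\cup\{c'(vw):vw\in E(H)\setminus\{e_H\}\}$ still obeys the caps (at most two of each colour $1,\ldots,4$ and at most one of colour $0$). This is always feasible: together these edges and phantoms number at most $d(v)\le 9$, which is the total budget at $v$, and $c'$ already obeys the caps at $v$. For $\Pp$, take all triples $(i,u,v)\in\Pp'$ for which colour $i$ still appears exactly once in both $C_u$ and $C_v$ (equivalently, no phantom of colour $i$ was attached at $u$ or at $v$). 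With $|C_v|=d(v)-d_H(v)$ and $c'|_{E(H)\setminus\{e_H\}}$ serving as a consistent $c_{\text{inner}}$ (the cycle-freeness check for $(\Cc,\Pp)$ follows from $\Pp\subseteq\Pp'$ together with the absence of monochromatic cycles in $c'$), the pair $(\Cc,\Pp)$ is among those that the program considers for $C$, so reducibility of $C$ by a standard proof yields a colouring $c$ of $E(H)$ compatible with $(\Cc,\Pp)$.

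The final step is verifying that $c\cup c'|_{E(G)\setminus E(H)}$ is proper. The per-vertex colour caps are immediate because $C_v'\subseteq C_v$ as multisets. The delicate part is ruling out a monochromatic $i$-cycle $X$. Decompose $X$ into maximal segments alternating between edges coloured by $c$ (inside $H$) and edges coloured by $c'$ (outside $H$), and consider any vertex $v$ at which such segments meet. Then $v$ has one $i$-edge of each type on $X$, so the outside count of colour $i$ at $v$ is exactly one (otherwise the total $i$-count at $v$ would exceed two, contradicting compatibility of $c$ with the larger multiset $C_v$). In particular no phantom of colour $i$ was added at $v$, so the outside segment between two consecutive such transition vertices $u,v$ corresponds to a triple $(i,u,v)\in\Pp$ that survived the filter. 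Hence $X$ translates into a monochromatic cycle in the abstract graph $H$ coloured by $c$ with $\Pp$-edges added, contradicting goodness of $c$ for $(\Cc,\Pp)$.

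The main obstacle is exactly this cycle translation: one must confirm that every transition vertex on $X$ escapes the phantom filter, and the argument for this uses precisely the cap constraint enforced by the embedding, ensuring that a phantom of the cycle's colour at a transition vertex would force $c$ to assign zero edges of that colour there and thereby preclude the vertex from lying on any monochromatic cycle of that colour in the first place.
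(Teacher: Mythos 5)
Your proof is correct and follows essentially the same route as the paper: pad each $C'_v$ with extra colours chosen to respect the per-vertex caps, observe that the resulting class is consistent with $c'|_{E(H)\setminus\{e_H\}}$ and is therefore among the classes examined by the standard proof, and combine the inner colouring it produces with the actual outer colouring $c'|_{E(G)\setminus E(H)}$. The only deviation is that you prune $\Pp'$ to the triples unaffected by phantom colours and then re-verify cycle-freeness explicitly via the transition-vertex argument, whereas the paper simply keeps $\Pp'$ unchanged; both variants are sound.
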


\begin{proof}
	Assume $C'$ appears in a minimal counterexample $G\in \G$. 
	Then we proceed as in the standard reducibility proof for $C$, i.e., we remove from $G$ the same edge $e$ of $H$ which was removed for $C$. 
	Since $G\in \G$, there is a colouring $c':E(G)\rightarrow\{0,1,2,3,4\}$ of $G'-e$.
	This colouring defines sets $\Cc'$ and $\Pp'$, as described above, where $\Cc'=\{C'_w \mid w\in V(H)\}$. For every $v\in V(H)$ define $C_v$ as an arbitrary multiset of $d(v)-d_H(v)$ colors such that
	\begin{equation}
	C'_v \subseteq C_v \subseteq \{0,1,1,2,2,3,3,4,4\}\setminus \{c'(vw)\mid vw\in E(H)\setminus\{e\}\}.
	\label{eq:C_v}
	\end{equation}
	Above, we use the standard notions of inclusion and difference for multisets.
	Note that there is at least one candidate for $C_v$, because $|C'_v|=d'(v)-d_H(v)$ and $d'(v)\le d(v)$. Define $\Cc = \{C_v\mid v \in V(H)\}$.
	
	Recall that in the standard reducibility proof for $C$ we check all classes of colourings that are consistent with at least one colouring of $E(H)\setminus \{e_H\}$. Since $(\Cc',\Pp')$ is consistent with $c'|_{E(H)\setminus \{e_H\}}$, by~\eqref{eq:C_v} also $(\Cc,\Pp')$ is consistent with $c'|_{E(H)\setminus \{e_H\}}$. Hence, the class $(\Cc,\Pp')$ is checked in the standard reducibility proof for $C$, and this proof provides a coloring $c_{\text{inner}}$ of $E(H)$ that can be extended by any coloring in $(\Cc,\Pp')$. But this means that $c_{\text{inner}}$ can be also extended by $c'|_{E(G)\setminus E(H)}$, because no color appears at a vertex more than the required number of times and there are no monochromatic cycles. Hence $G$ can be colored and hence it is not a counterexample, a contradiction.
\end{proof}

%Indeed, if the inner edges of $H$ can be coloured for every colouring of outer edges that belongs to a class of colourings $\mathcal{C}$, then they also can be coloured for every colouring in $\mathcal{C}'=\{C'_v \mid v \in V(H)\}$ where $C'_v \subseteq C_v$ for every $v\in V(H)$.

\subsection{Implementation}

Script {\tt reduce.py} generates all classes of colourings of $G'$ as follows.
First, the recursive function {\tt generate\_outside\_colorings} generates all possible collections $\Cc$ (as described in Section~\ref{sec:structure}).
At the bottom of the recursion, i.e., when the construction of $\Cc$ is finished, it calls another recursive function {\tt generate\_outer\_paths}.
This function in turn, generates all possible sets $\Pp$.
At the bottom of the recursion both $\Cc$ and $\Pp$ are constructed.
Then, a function {\tt color\_inner} is called, which checks if there is at least one colouring of ${E(H)\setminus {e_H}}$ that is consistent with $\Cc$ and $\Pp$.
If this is not the case, the pair $(\Cc,\Pp)$ is skipped.
Otherwise, the function {\tt color\_inner} is called again, just this time with the parameter {\tt remove\_edge} set to false, which forces the function to colour the edge $e_H$ as well. Function {\tt color\_inner} is a straightforward recursive function which generates all colourings of $E(H)\setminus \{e_H\}$ or $E(H)$ so that the number of edges of given colour at every vertex is as required.
Moreover, after colouring each single edge $e=ab$ it checks if it closes a cycle with a path in $\Pp$ and if not, it updates $\Pp$, i.e., either joins two paths in $\Pp$ (with endpoints in $a$ and $b$, respectively), or extends a path from $\Pp$ (with an endpoint in $a$ or $b$) by $e$, or just adds a single edge path to $\Pp$.

While writing the code, it was our priority to make it as much self-explanatory as possible. We have chosen Python programming language for its high readability of the code. We deliberately have not exploit symmetries between colours to keep the algorithm and the argument simple. The price is the running time: it used 149 hours (on a single processor, Intel Xeon CPU E5-2698 v4, 2.20GHz) for the most demanding configuration \rxconf{c:2weak3neighbours3neighbour}. The time used for other configurations varied from a couple of seconds to 94 minutes.

\section{Conclusion}\label{sect:concl}

We have shown that Conjecture~\ref{con:new} is true for $\Delta=9$ but the $\Delta=7$ is still open. This would be an interesting strengthening of the result of Sanders and Zhao~\cite{DBLP:journals/jct/SandersZ01a} who proved that planar graphs maximum degree 7 are Class I. We were unable to extend our approach to $\Delta=7$ and we suppose that for that a different argument, of a more global nature, is needed. Indeed, when $\Delta=7$ we have less charge at a vertex and we quickly arrive at configurations with elements of negative charge that cannot be reduced using the standard proof generated by our program. Of course it is possible that this can be circumvented by a smart set of discharging rules, but it is worth noting that such a discharging proof is not known for edge coloring (which is a less constrained problem) even for $\Delta=8$.

The Planar Linear Arboricity Conjecture (see Cygan et al.~\cite{CyganHKLW12}) is still open. However, in the spirit of this work one can consider its weaker versions.
For example, what is the maximum $k$ such that edges of a planar graph of maximum degree 8 can be partitioned into $k$ linear forests and $8-2k$ matchings? At the moment, by Vizing Theorem we know that $k\ge 0$, but even the case $k=1$ has not been investigated.

\section*{Acknowledgments}
The authors are grateful to Marek Cygan who implemented an early version of the computer program for checking reducibility in the context of linear arboricity. We thank Arkadiusz Socała for discussions in early stage of the project. We are also very grateful to the reviewers for careful reading and numerous helpful comments. 

This research is a part of projects that have received funding from the European Research Council (ERC)
under the European Union’s Horizon 2020 research and innovation programme Grant Agreements 677651 (ŁK, project TOTAL) and 948057 (ŁK, MP, project BOBR).
JC and ŁK were also supported by the Polish Ministry of Science and Higher Education project Szkoła Orłów, project number 500-D110-06-0465160.

\bibliographystyle{plainurl}
\bibliography{linarb}

\end{document}